\documentclass[11pt]{amsart}

\usepackage[a4paper,margin=1in]{geometry}
\usepackage{amsmath,amssymb,amsthm,mathtools}
\usepackage{booktabs}
\usepackage{graphicx}
\usepackage{enumitem}
\usepackage{hyperref}
\usepackage{tikz}
\usetikzlibrary{arrows.meta,calc,positioning}

\hypersetup{colorlinks=true,linkcolor=blue,citecolor=blue,urlcolor=blue}

\newtheorem{theorem}{Theorem}[section]
\newtheorem{proposition}[theorem]{Proposition}
\newtheorem{lemma}[theorem]{Lemma}

\newtheorem{problem}[theorem]{Problem}
\newtheorem{question}[theorem]{Question}

\theoremstyle{definition}
\newtheorem{definition}[theorem]{Definition}
\newtheorem{remark}[theorem]{Remark}

\newcommand{\RR}{\mathbb R}
\newcommand{\ZZ}{\mathbb Z}
\newcommand{\SC}{\operatorname{SC}}
\newcommand{\BFACF}{\operatorname{BFACF}}
\newcommand{\Len}{\operatorname{Len}}

\newcommand{\Thi}{\operatorname{Thi}}
\newcommand{\Rop}{\operatorname{Rop}}
\newcommand{\diam}{\operatorname{diam}}

\newcommand{\Isomp}{\operatorname{Isom}^+}
\newcommand{\CRad}{\operatorname{CRad}}
\newcommand{\Rg}{R_g}
\newcommand{\sD}{\mathsf D}

\newcommand{\Lat}{\mathcal L}
\newcommand{\Pol}{\mathcal P}
\newcommand{\Comp}{\mathcal C}
\newcommand{\Move}{\mathcal M}
\newcommand{\Smooth}{\mathcal S}

\newcommand{\eps}{\varepsilon}
\newcommand{\seed}{\mathrm{seed}}

\title[Density and compression on merge trees]{Density and Compression on Lattice-Knot Merge Trees}
\author{Makoto Ozawa}
\address{Department of Natural Sciences, Komazawa University, Tokyo, Japan}
\email{w3c@komazawa-u.ac.jp}
\subjclass[2020]{Primary 57K10; Secondary 49Q10, 49Q20, 53A04, 52C25}
\keywords{lattice knot, BFACF move, merge tree, $p$-density, compression radius, ropelength, thickness, filtered move graph, raw lattice profile}
\date{}

\begin{document}

\begin{abstract}
We introduce a finite-state framework for discrete $p$-density and
compression-radius profiles of lattice knots.  At a lattice length level $N$,
one considers lattice-polygon representatives of a fixed knot type, modulo
proper lattice isometries, and optionally restricts to the subgraph generated
from specified seeds by a local move system such as BFACF moves.  Raw density
and compression functionals are attached directly to the lattice polygons.  To
reduce lattice artefacts, the framework also permits controlled perturbation,
corner smoothing, thickness certification, and thickness normalization.

The unconditional results are finite-state statements: computability for a
fixed explored graph and finite smoothing scheme, monotonicity under increasing
length caps, and componentwise inequalities under graph mergers.  We further
organize the construction as a two-parameter filtration by lattice length and by
a density or compression threshold, giving a computable discrete counterpart of
continuous density- and compression-filtered knot spaces.  The analytic
lattice-to-continuous approximation problem is left open.

As preliminary numerical checks, we report raw-lattice computations for the
square unknot and a $24$-edge cubic-lattice trefoil, together with a restricted
length-filtration profile for the trefoil at $N=24,26,28,30,32$.  We then
compute raw geometric profiles on the complete minimal-layer merge trees of the
amphichiral knots $4_1$ and $6_3$ determined in the companion paper.  For every
birth component we evaluate the full distribution of $\rho_2$, $\rho_\infty$,
$\CRad_2$, and $\CRad_\infty$ on its minimal support.  We also evaluate the
entire bounded component at the first merge level: all $19618$ states of the
$4_1$ component at $N=32$, and both $12337$-state mirror components of $6_3$
at $N=42$.  Reflection gives identical geometric distributions on paired
branches.  The merge levels nevertheless enlarge the attainable geometric
ranges: for $4_1$ both density minima improve at $N=32$, while the two
compression minima remain unchanged; for $6_3$ all four displayed minima
improve at $N=42$.  The final $6_3$ merger at $N=44$ is represented by a
verified explicit certificate but its full bounded component is not claimed to
have been exhaustively enumerated.  Along the selected maximal-barrier
certificates, density and compression behave as competing rather than redundant
objectives.  All displayed values are explicitly raw-lattice quantities, not
exhaustive smoothed knot-type invariants.
\end{abstract}

\maketitle

%==================================================
\section{Introduction}
%==================================================

This paper develops a discrete, computable model for geometric density and
compression invariants of knot types.  The continuous $p$-density and
ropelength-windowed density theory is developed in~\cite{OzawaPDensity}, while
the general density--compression-radius factorization for scale-covariant size
functionals is developed in~\cite{OzawaCompression}.  The starting point here is
the continuous scale-free density
\[
 \rho_p(\gamma)=\frac{\Len(\gamma)}{\sD_p(\gamma)},
\]
where $\sD_p$ is an $L^p$-type mean chord spread, and the compression radius
\[
 \CRad_D(\gamma)=\frac{D(\gamma)}{\Thi(\gamma)}
\]
associated to a Euclidean size functional $D$.  These quantities are invariant
under similarity transformations of $\RR^3$.  When optimized over all
representatives of a knot type without geometric constraint, they may degenerate
because one can hide the knot in a small local region and let the rest of the
curve become long and nearly unknotted; this continuous degeneration is one of
the motivations for the ropelength-windowed theory in~\cite{OzawaPDensity}.
Ropelength-windowed variants prevent this degeneration by imposing thickness
normalization and a length bound.

The present paper asks how these quantities can be computed experimentally from
lattice knots.  The model is motivated by the companion lattice-filtered move
graph framework of~\cite{OzawaDiscrete}.  The lattice and BFACF background is
classical: standard BFACF moves are ergodic within each simple-cubic knot type
\cite{JvRW}, and minimal lattice lengths and minimal-layer populations have been
studied extensively \cite{DiaoMinimal,DiaoSmallest,Scharein2009,MinimalKnots}.
In particular, Scharein et al. enumerated minimal lattice knots and partitioned
them into classes under length-preserving BFACF type-0 moves
\cite{Scharein2009}.  Thus neither BFACF state spaces nor minimal-layer class
structure are claimed as new here.

The companion paper asks the quantitative minimax question left by this
background and, for the two principal amphichiral examples, determines the
complete minimal-layer hierarchy rather than only a prescribed seed pair
\cite{OzawaDiscrete}.  The $152$ minimal $4_1$ classes split at length $30$ into
four type-$0$ components of sizes $58,58,18,18$, all of which merge at $N=32$.
The $148$ minimal $6_3$ classes split at length $40$ into twelve components of
sizes $39,39,10,10,9,9,7,7,6,6,3,3$; these merge into two mirror-related
components at $N=42$ and into one component at $N=44$.  Thus the complete
length-barrier spectra on birth-level components are $\{0,2\}$ and
$\{0,2,4\}$, respectively.  The verified seed-to-mirror certificates and
coordinate data are archived in~\cite{OzawaDiscreteData}.  Here we do not
re-claim the merge-tree computation as the main contribution.  Instead, we
attach geometric density and compression functionals to the tree itself: first
to every birth-level component through its complete minimal support, and then to
the exhaustively enumerated bounded components at the first merge levels.  We
also retain selected certificates realizing the maximal barriers in order to
study pathwise geometric cost and the resulting geometric sublevel filtrations.

There are two levels of discretization.  The first is purely lattice-theoretic:
one evaluates $\rho_p$-type and $\CRad_D$-type quantities directly on lattice
polygons.  This is finite and transparent, but it retains strong artefacts of
the cubic lattice.  The second is a smoothed lattice model: each lattice polygon
is perturbed slightly and its corners are rounded, then the resulting
$C^{1,1}$ curve is rescaled to thickness one.  This is intended to give a closer
experimental analogue of the continuous ropelength-windowed definitions.  In the
present version, however, the smoothed stage is a certified protocol rather than
a source of large numerical tables.

The guiding principle is therefore
\[
\begin{aligned}
&\text{length filtration}+\text{local move graph}\\
&\qquad+\text{small perturbation}+\text{corner smoothing}
+\text{thickness normalization}.
\end{aligned}
\]
The resulting profiles are not claimed to be absolute invariants of the knot
type unless the lattice, move system, smoothing class, and search region are
specified.  Instead, they are reproducible finite-state experimental quantities.
This distinction is essential.  Global BFACF ergodicity guarantees eventual
connectivity inside a fixed knot type, but a bounded seed-generated search gives
information only about paths that remain under the chosen length cap; it does not
automatically enumerate the full bounded state graph.

The paper is written to be self-contained at the level of definitions.  The
lattice-filtered move graph, the density functionals, and the compression
functionals used below are all defined in the present text.  The continuous
papers~\cite{OzawaPDensity,OzawaCompression} identify the invariants being
discretized.  In particular, the density- and compression-sublevel filtrations
discussed in~\cite{OzawaCompression} motivate the two-parameter finite graphs
introduced below.  No continuous approximation theorem is used as an external
hypothesis in the finite-state arguments.  The elementary finite-state
propositions should be read as reproducibility certificates rather than as deep
existence theorems: the nontrivial analytic work in future experiments lies in
embeddedness checking, thickness certification, lattice approximation, and the
control of search bias.

\subsection*{Main contributions}

\begin{enumerate}[label=(\arabic*)]
\item We define raw and smoothed lattice $p$-density and compression-radius
profiles on filtered lattice-polygon sets and seed-generated move graphs.
\item We record finite-state computability for fixed length level and finite
smoothing scheme, together with monotonicity under increasing length caps.
\item We give an explicit deterministic corner-rounding scheme and formulate a
conservative interval-arithmetic pipeline for certified lower bounds on the
thickness of the resulting line-and-arc curves.
\item We place density- and compression-sublevel graphs inside the
lattice-length filtration.  This produces a computable two-parameter filtration
whose component births and mergers refine the global minimum profiles; the use of
nested component filtrations is standard, while the geometric vertex labels and
knot-theoretic optimization questions are specific to the present setting.
\item We include pilot raw-lattice computations for the square unknot and a
$24$-edge trefoil seed, together with a restricted $N$-increasing trefoil profile
at levels $24,26,28,30,32$.
\item We compute exact raw geometric profiles on every birth component of the
complete minimal-layer merge trees of $4_1$ and $6_3$.  We further compute the
full bounded profile of the unique $19618$-state $4_1$ component at $N=32$ and
of the two $12337$-state mirror components of $6_3$ at $N=42$.  Complete
state-level tables are supplied as reproducibility data.
\item We show that reflection-related branches have identical density and
compression distributions even when they remain disconnected in the bounded
BFACF graph.  We also retain verified maximal-barrier paths for $4_1$ and $6_3$
and show at the path level that density and compression can improve in opposite
directions.
\item We formulate lattice-to-continuous convergence and effective error bounds
only as open problems with scaled length windows, not as claimed theorems.
\item We relate componentwise density and compression minima to merge structure
and to threshold-filtered admissible components.
\end{enumerate}

\subsection*{Notation guide}
For reference we collect the main decorated symbols used in the paper.
\begin{center}
\small
\begin{tabular}{ll}
\toprule
symbol & meaning \\
\midrule
$\Lat$ & a periodic lattice; in computations $\SC=\ZZ^3$ \\
$\Pol_N^{\Lat}(K)$ & lattice polygons of type $K$ with $\ell(P)\le N$, modulo oriented lattice isometry \\
$G_N^{\Lat,\Move}(K;S)$ & seed-generated length-$N$ move graph from seeds $S$ \\
$G_{N,\Lambda}^{F}(K;S)$ & sublevel graph with vertex functional $F\leq\Lambda$ \\
$\sD_p$ & mean chord spread; $\sD_\infty=\diam$ \\
$\rho_p$ & scale-free density $\Len/\sD_p$ or $\ell/\sD_p$ in the raw lattice model \\
$\CRad_D$ & compression radius $D/\Thi$; in raw SC tables $\Thi_{\rm poly}=1/2$ is a labelled convention \\
$\Smooth(P)$ & finite smoothing/perturbation family attached to $P$ \\
$\rho_{p,N}^{\Lat}$, $\CRad_{D,N}^{\Lat}$ & raw lattice level-$N$ profiles \\
$\rho_{p,N}^{\Lat,\Move,\Smooth,\seed}$ & seed-generated smoothed level-$N$ profile \\
$\rho_{p,\lambda}^{\operatorname{rop}}$, $\CRad_{D,\lambda}^{\operatorname{rop}}$ & continuous ropelength-windowed profiles \\
\bottomrule
\end{tabular}
\end{center}

%==================================================
\section{Continuous density and compression functionals}\label{sec:continuous}
%==================================================

Let $\gamma$ be a tame embedded closed curve in $\RR^3$, parameterized by arclength.
Write $L=\Len(\gamma)$.  The definitions in this section are the continuous
quantities that the present lattice construction discretizes; see
\cite{OzawaPDensity} for the $p$-density family and ropelength windows, and
\cite{OzawaCompression} for compression radii associated with general
scale-covariant size functionals.

\begin{definition}[$p$-spread]
For $-1<p<\infty$, $p\ne0$, define
\[
 \sD_p(\gamma)=
 \left(\frac{1}{L^2}\int_\gamma\int_\gamma |x-y|^p\,ds_x\,ds_y\right)^{1/p}.
\]
For $p=0$, define $\sD_0(\gamma)$ by the logarithmic mean
\[
 \log \sD_0(\gamma)=
 \frac{1}{L^2}\int_\gamma\int_\gamma \log |x-y|\,ds_x\,ds_y,
\]
whenever the integral is finite.  For $p=\infty$, set
\[
 \sD_\infty(\gamma)=\diam(\gamma).
\]
\end{definition}

\begin{remark}[Why $p>-1$]
The lower bound $p>-1$ is the local integrability threshold along the diagonal.
Indeed, near a regular point of an embedded arclength-parametrized curve one has
$|\gamma(s)-\gamma(t)|\sim |s-t|$, so the diagonal contribution is locally of the
form
\[
 \int\!\int |s-t|^p\,ds\,dt,
\]
which is finite precisely for $p>-1$.  For $p=0$, the logarithmic singularity is
also locally integrable.  In particular, for a compact embedded $C^1$ curve, the
logarithmic mean is finite; polygonal representatives are handled edgewise, with
the same local integrability at points on the same edge.
\end{remark}

\begin{definition}[$p$-density]
The $p$-density of a representative $\gamma$ is
\[
 \rho_p(\gamma)=\frac{\Len(\gamma)}{\sD_p(\gamma)}.
\]
\end{definition}

Let $D$ be a Euclidean-invariant size functional on embedded curves, homogeneous
of degree one:
\[
 D(a\gamma)=aD(\gamma),\qquad a>0.
\]
Examples include $\diam$, the minimal enclosing radius, the radius of gyration,
and $\sD_p$.

\begin{definition}[Compression radius]
The $D$-compression radius of $\gamma$ is
\[
 \CRad_D(\gamma)=\frac{D(\gamma)}{\Thi(\gamma)}.
\]
If $D=\sD_p$, we write $\CRad_p(\gamma)=\sD_p(\gamma)/\Thi(\gamma)$.
\end{definition}

\begin{lemma}[Scale invariance]
For every $a>0$,
\[
 \rho_p(a\gamma)=\rho_p(\gamma),\qquad
 \CRad_D(a\gamma)=\CRad_D(\gamma).
\]
\end{lemma}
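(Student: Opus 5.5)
The plan is to check directly how each ingredient scales under the dilation $x\mapsto ax$, and then observe that both $\rho_p$ and $\CRad_D$ are quotients of two quantities homogeneous of degree one, so the factors of $a$ cancel. Write $\tilde\gamma=a\gamma$. If $\gamma$ is parameterized by arclength $s$, then $\tilde\gamma(\tilde s)=a\gamma(\tilde s/a)$ is an arclength parameterization of $\tilde\gamma$. It follows that $\Len(\tilde\gamma)=aL$ and $ds_{\tilde x}=a\,ds_x$ under the correspondence $\tilde x=ax$.

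\textbf{Spread.} For $p\neq0$ finite, change variables in the double integral:
\[
\frac{1}{a^2L^2}\int_{\tilde\gamma}\int_{\tilde\gamma}|\tilde x-\tilde y|^p\,ds_{\tilde x}\,ds_{\tilde y}
=\frac{1}{a^2L^2}\,a^{p}a^{2}\int_\gamma\int_\gamma|x-y|^p\,ds_x\,ds_y .
\]
Taking the $1/p$-th power gives $\sD_p(a\gamma)=a\,\sD_p(\gamma)$. Finiteness is preserved, so the range $p>-1$ causes no issue.

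For $p=0$, use $\log|ax-ay|=\log a+\log|x-y|$. The normalized double integral of the constant $\log a$ equals $\log a$, because the normalization $1/L^2$ is exactly the reciprocal of the total mass. This gives $\log\sD_0(a\gamma)=\log a+\log\sD_0(\gamma)$. For $p=\infty$, the relation $\diam(a\gamma)=a\diam(\gamma)$ is immediate. Hence $\rho_p(a\gamma)=aL/(a\sD_p(\gamma))=\rho_p(\gamma)$.

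\textbf{Compression.} $D$ is homogeneous of degree one by hypothesis, so it remains to show $\Thi(a\gamma)=a\Thi(\gamma)$. The paper has not fixed a definition of $\Thi$ at this point, so I would argue from whichever standard characterization is in force. If thickness is defined as an infimum of global radii of curvature (circumradii of triples of points), then each circumradius scales by $a$, because dilation maps circles to circles of radius multiplied by $a$. If it is instead defined as the supremal radius of an embedded normal tube, then dilation carries an embedded tube of radius $r$ about $\gamma$ to an embedded tube of radius $ar$ about $a\gamma$. Applying the same argument with $1/a$ gives the reverse inequality. Either way $\Thi(a\gamma)=a\Thi(\gamma)$, and so $\CRad_D(a\gamma)=aD(\gamma)/(a\Thi(\gamma))=\CRad_D(\gamma)$.

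\textbf{Expected obstacle.} The only step needing any care is the homogeneity of $\Thi$, since it depends on a definition not yet stated; the tube or circumradius argument above covers the standard choices. One degenerate case should also be flagged: if $\Thi(\gamma)=0$, for instance for a polygon in the continuous sense, then both sides are $+\infty$ and the identity holds trivially. The $p=0$ case is the other minor point, and it relies only on the normalization making the averaging measure a probability measure.
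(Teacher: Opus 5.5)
Your proposal is correct and follows the paper's argument: the paper's proof likewise notes that $\Len$ and $\sD_p$, and likewise $D$ and $\Thi$, each scale by $a$, so the ratios are unchanged. You simply supply the routine checks that the paper asserts in one line, namely the change of variables for $\sD_p$ (including the cases $p=0$ and $p=\infty$) and the homogeneity of $\Thi$.
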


\begin{proof}
Both $\Len$ and $\sD_p$ scale by $a$.  Similarly, both $D$ and $\Thi$ scale by
$a$.  The ratios are therefore unchanged.
\end{proof}

\begin{definition}[Ropelength-windowed continuous profiles]
Let $K$ be a knot type and $\lambda\ge1$.  Define
\[
 \rho_{p,\lambda}^{\operatorname{rop}}(K)=
 \inf\left\{
 \frac{\Len(\gamma)}{\sD_p(\gamma)}\ \middle|\
 \gamma\in K,\ \Thi(\gamma)\ge1,
 \Len(\gamma)\le \lambda\Rop(K)
 \right\}.
\]
Likewise,
\[
 \CRad_{D,\lambda}^{\operatorname{rop}}(K)=
 \inf\left\{
 \frac{D(\gamma)}{\Thi(\gamma)}\ \middle|\
 \gamma\in K,\ \Thi(\gamma)>0,
 \frac{\Len(\gamma)}{\Thi(\gamma)}\le \lambda\Rop(K)
 \right\}.
\]
\end{definition}

\begin{remark}[Equivalent normalizations]
The two displayed windows use slightly different normalizations only for
notational convenience.  Since $\rho_p$ and $\CRad_D$ are scale invariant, a curve
with $\Thi(\gamma)>0$ and
\[
 \Len(\gamma)/\Thi(\gamma)\le \lambda\Rop(K)
\]
can be rescaled to thickness one without changing either functional.  Conversely,
under the condition $\Thi(\gamma)\ge 1$ and
$\Len(\gamma)\le\lambda\Rop(K)$, the ropelength bound
$\Len(\gamma)/\Thi(\gamma)\le\lambda\Rop(K)$ is automatic.  Thus both formulations
represent the same ropelength-window principle; the distinction is only a matter
of whether thickness one is imposed before or after evaluating a scale-free
quantity.
\end{remark}

The length window is essential.  Without it, density-type quantities may collapse
under local knotting constructions and cease to detect the knot type; the
continuous version of this degeneration and the ropelength-windowed refinement
are studied in~\cite{OzawaPDensity}.

%==================================================
\section{Lattice polygons and filtered move graphs}
%==================================================

Let $\Lat$ be a periodic lattice in $\RR^3$.  In the computational sections we
specialize to the simple cubic lattice $\SC=\ZZ^3$ with unit edges.

\begin{definition}[Lattice polygon]
A lattice polygon in $\Lat$ is an embedded closed polygonal curve whose edges are
lattice edges.  Its lattice length is the number of lattice edges and is denoted
$\ell(P)$.
\end{definition}

For a knot type $K$, write
\[
 \Pol_N^{\Lat}(K)=
 \{P\subset\Lat: P\text{ is a lattice polygon of type }K,
 \ell(P)\le N\}/\Isomp(\Lat).
\]
Here $\Isomp(\Lat)$ is the orientation-preserving lattice isometry group.  If
mirrors are to be identified, one may instead divide by the full lattice isometry
group.  In this paper mirrors are not identified unless explicitly stated.

\begin{definition}[Lattice length minimum]
The minimal lattice length of $K$ in $\Lat$ is
\[
 n_{\Lat}(K)=\min\{\ell(P):P\subset\Lat\text{ represents }K\}.
\]
\end{definition}

Let $\Move$ be a specified local move system on lattice polygons, for example a
BFACF-type move system in the simple cubic lattice; see
\cite{BergForster,ACCF,JvRW} for the origins and knot-theoretic use of the BFACF
algorithm.

\begin{definition}[Filtered move graph]
The lattice-filtered move graph at level $N$ is the finite graph
\[
 G_N^{\Lat,\Move}(K)
\]
whose vertices are $\Pol_N^{\Lat}(K)$ and whose edges connect two vertices when
they differ by one allowed move in $\Move$.
\end{definition}

\begin{definition}[Seed-generated graph]
Given a finite seed set $S\subset\Pol_{N_0}^{\Lat}(K)$, the seed-generated graph
\[
 G_N^{\Lat,\Move}(K;S)
\]
is the induced subgraph of $G_N^{\Lat,\Move}(K)$ generated by all vertices
reachable from $S$ through $\Move$-paths that remain at length at most $N$.
\end{definition}

\begin{remark}[Global BFACF ergodicity and bounded searches]
For the standard simple-cubic BFACF moves, the irreducibility classes on
unrooted polygons are exactly the knot types \cite{JvRW}.  Hence representatives
of the same knot type are eventually connected when no length cap is imposed.
This global theorem does not make a bounded search exhaustive: a BFACF path to a
polygon of length at most $N$ may have to pass through a longer intermediate
polygon.  Thus a seed-generated computation under the cap $N$ determines the
bounded components reachable from the chosen seeds, not automatically every
vertex of $G_N^{\Lat,\Move}(K)$.  In the simple-cubic BFACF setting, elementary
moves change the length by $0$ or $\pm2$; therefore a seed-generated search only
visits length levels with the same parity as the chosen seed length.
\end{remark}

\begin{definition}[Admissible length levels]
Given a move system $\Move$ and seed length $N_0$, let
\[
 \mathcal N_{\Move}(N_0)=\{N\ge N_0: N\text{ is reachable as a length level from }
 N_0\text{ under }\Move\}.
\]
For the simple-cubic BFACF searches used in the pilot computations,
$\mathcal N_{\BFACF}(N_0)=\{N_0,N_0+2,N_0+4,\ldots\}$.
\end{definition}

%==================================================
\section{Discrete lattice density and compression profiles}\label{sec:lattice-profiles}
%==================================================

The simplest discrete model evaluates the same pairwise-distance functionals on
the polygonal curve $P$ itself.

\begin{definition}[Lattice $p$-density at level $N$]
For a lattice polygon $P$, define
\[
 \rho_p^{\Lat}(P)=\frac{\ell(P)}{\sD_p(P)},
\]
where $\sD_p(P)$ is computed using arclength measure on the polygonal curve.
The level-$N$ lattice $p$-density of $K$ is
\[
 \rho_{p,N}^{\Lat}(K)=
 \min\{\rho_p^{\Lat}(P):P\in\Pol_N^{\Lat}(K)\}.
\]
For a seed set $S$, define
\[
 \rho_{p,N}^{\Lat,\Move,\seed}(K;S)=
 \min\{\rho_p^{\Lat}(P):P\in G_N^{\Lat,\Move}(K;S)\}.
\]
\end{definition}

\begin{definition}[Lattice compression radius at level $N$]
For a polygonal thickness $\Thi_{\operatorname{poly}}(P)$, set
\[
 \CRad_D^{\Lat}(P)=\frac{D(P)}{\Thi_{\operatorname{poly}}(P)}.
\]
Then define
\[
 \CRad_{D,N}^{\Lat}(K)=
 \min\{\CRad_D^{\Lat}(P):P\in\Pol_N^{\Lat}(K)\}
\]
and, for seed-generated computations,
\[
 \CRad_{D,N}^{\Lat,\Move,\seed}(K;S)=
 \min\{\CRad_D^{\Lat}(P):P\in G_N^{\Lat,\Move}(K;S)\}.
\]
\end{definition}

\begin{proposition}[Finite-state computability]\label{prop:finite-state-computability}
For fixed $\Lat$, $K$, $N$, and finite move-generated vertex set, the quantities
\[
 \rho_{p,N}^{\Lat,\Move,\seed}(K;S),\qquad
 \CRad_{D,N}^{\Lat,\Move,\seed}(K;S)
\]
are finite computable real numbers up to the numerical precision used to evaluate
the pairwise integrals and thickness.
If $\Pol_N^{\Lat}(K)$ is explicitly enumerated, the corresponding global level-$N$
quantities are also computable.
\end{proposition}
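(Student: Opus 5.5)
The plan is to reduce the statement to three facts: the vertex set is finite, each single-vertex functional is a finite positive real number computable to any prescribed precision, and a minimum over a finite set of such numbers is attained and computable.

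\textbf{Finiteness.} First I would check that $G_N^{\Lat,\Move}(K;S)$ has finitely many vertices. Up to lattice translation, a lattice polygon with $\ell(P)\le N$ contains a fixed base vertex and lies in the ball of radius $N/2$ about it. There are therefore at most $6^N$ edge sequences from the base point, so $\Pol_N^{\Lat}(K)$ is finite. The seed-generated graph is an induced subgraph of it, and the hypothesis already assumes a finite move-generated vertex set. Moreover, the graph can be produced explicitly. Breadth-first search from $S$ applies the finitely many local moves at each vertex, discards results of length $>N$, and reduces each polygon to a canonical representative modulo $\Isomp(\Lat)$, for example by lexicographically minimizing over the $24$ rotations and all translations that put a vertex at the origin. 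This terminates because the set is finite. Knot type needs no separate test here, since moves preserve it.

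\textbf{Pointwise values.} For a fixed polygon $P$, the length $\ell(P)$ is an integer. I would compute $\sD_p(P)$ as a finite sum over ordered pairs of edges $(e,f)$ of the double integrals $\int_e\int_f|x-y|^p$. Each integrand is the $p$-th power of the square root of a quadratic polynomial in the two edge parameters. These integrals are finite, including the same-edge terms, where finiteness uses $p>-1$ or the log case as in the remark on $p>-1$. They can be evaluated by elementary closed forms or by rigorous quadrature to any precision, and $\sD_\infty=\diam$ is a maximum over vertex pairs. Then $\sD_p(P)>0$ because $P$ is not a point, so $\rho_p^{\Lat}(P)$ is a finite positive real. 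Similarly, $D(P)$ is computable for the listed size functionals. For $\CRad_D^{\Lat}(P)$ to be finite, $\Thi_{\operatorname{poly}}(P)$ must be a positive computable number. With the paper's convention $\Thi_{\rm poly}=1/2$ on $\SC$ this is immediate. For a general polygonal thickness, I would note that it is a minimum of finitely many explicit quantities (edge-pair distances and vertex angles) and is positive for embedded polygons.

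\textbf{Conclusion.} A minimum of finitely many finite reals, each known to precision $\eps$, is attained and is known to precision $\eps$, because $|\min a_i-\min b_i|\le\max|a_i-b_i|$. This gives both seed-generated quantities. If $\Pol_N^{\Lat}(K)$ is explicitly enumerated, the same argument applied to that finite list gives the global quantities.

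The main obstacle is the meaning of ``computable'' for the pointwise functionals, especially $\sD_p$ for noninteger $p$ and $p=0$, because of the singular diagonal terms on the same and adjacent edges. I would handle those terms analytically: integrate $|s-t|^p$ exactly on the same edge, and for adjacent perpendicular edges use the closed form in polar coordinates. This leaves only smooth integrands on non-adjacent edge pairs, where validated quadrature with explicit error bounds applies. That is exactly the ``up to numerical precision'' proviso in the statement.
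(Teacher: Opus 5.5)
Your proposal is correct and follows the same route as the paper. The paper's proof just notes that the capped seed-generated graph (or the enumerated list $\Pol_N^{\Lat}(K)$) is finite, evaluates the functionals at each vertex, and takes a finite minimum; your extra material (the counting bound, canonicalization, the singular-integral handling, positivity of $\Thi_{\operatorname{poly}}$, and the estimate $|\min a_i-\min b_i|\le\max|a_i-b_i|$) spells out the ``up to numerical precision'' proviso that the paper leaves implicit, without changing the argument.
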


\begin{proof}
A seed-generated graph under a finite length cap has finitely many vertices.  The
functionals are evaluated on each vertex and a finite minimum is taken.  The same
argument applies to an exhaustive enumerated list of $\Pol_N^{\Lat}(K)$.
\end{proof}

\begin{proposition}[Monotonicity in the length level]\label{prop:monotonicity}
For $N\le N'$ one has
\[
 \rho_{p,N'}^{\Lat}(K)\le \rho_{p,N}^{\Lat}(K),\qquad
 \CRad_{D,N'}^{\Lat}(K)\le \CRad_{D,N}^{\Lat}(K),
\]
whenever both sides are defined.  The same holds for seed-generated profiles
provided the seed-generated vertex sets are nested with $N$.
\end{proposition}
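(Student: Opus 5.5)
The plan is to reduce the statement to the elementary fact that a minimum over a larger set is no larger than a minimum over a smaller one. The only real work is to check that the relevant vertex sets are nested and that the functionals being minimized do not depend on the level.

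\textbf{Step 1: nesting of the global sets.} For $N\le N'$, I will show $\Pol_N^{\Lat}(K)\subseteq\Pol_{N'}^{\Lat}(K)$. A lattice polygon of type $K$ with $\ell(P)\le N$ also satisfies $\ell(P)\le N'$. The quotient is by the same group $\Isomp(\Lat)$, and isometry classes are preserved: lattice length and knot type are invariant under proper lattice isometries. So the inclusion descends to the quotients.

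\textbf{Step 2: level-independence of the functionals.} The vertex functionals $\rho_p^{\Lat}(P)=\ell(P)/\sD_p(P)$ and $\CRad_D^{\Lat}(P)=D(P)/\Thi_{\operatorname{poly}}(P)$ depend only on the polygon $P$, and not on the ambient level $N$. They are also constant on isometry classes, because $\ell$, $\sD_p$, $D$ and $\Thi_{\operatorname{poly}}$ are Euclidean-invariant. Hence they are well-defined functions on the union of all $\Pol_N^{\Lat}(K)$, and the level-$N$ profile is the minimum of a single fixed function over $\Pol_N^{\Lat}(K)$.

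\textbf{Step 3: conclusion.} The hypothesis ``whenever both sides are defined'' means both sets are nonempty and the minima are attained; attainment is automatic here since the sets are finite. For $f=\rho_p^{\Lat}$ or $\CRad_D^{\Lat}$,
\[
\min_{\Pol_{N'}^{\Lat}(K)} f \;\le\; f(P_0) \;=\; \min_{\Pol_N^{\Lat}(K)} f ,
\]
where $P_0$ is a minimizer at level $N$, which lies in the larger set by Step 1. For the seed-generated profiles, the same argument applies verbatim once the vertex sets of $G_N^{\Lat,\Move}(K;S)$ and $G_{N'}^{\Lat,\Move}(K;S)$ are nested, which is exactly the proviso in the statement.

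I would add a brief remark that this proviso essentially always holds. A $\Move$-path staying at length $\le N$ also stays at length $\le N'$, so reachability under cap $N$ implies reachability under cap $N'$, provided the same seeds $S$ are used at both levels.

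\textbf{Main obstacle.} There is little mathematical difficulty. The only point needing care is Step 2: checking that each functional is a genuine class function, independent of the chosen representative and of the level. In particular, if $\Thi_{\operatorname{poly}}$ is a fixed convention such as $1/2$, this is immediate. If it is a computed quantity, one has to note its invariance under lattice isometries.
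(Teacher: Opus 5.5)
Your proposal is correct and uses the same argument as the paper: the level-$N$ feasible set is contained in the level-$N'$ feasible set, so the minimum over the larger set cannot be larger. Your extra checks are sound details that the paper leaves implicit. These are that the functionals are level-independent and invariant under lattice isometries, and that reachability from the same seeds under cap $N$ implies reachability under cap $N'$.
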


\begin{proof}
The level-$N$ feasible set is contained in the level-$N'$ feasible set, so taking
minimum over the larger set cannot increase the value.
\end{proof}

\begin{remark}[Status of the finite-state propositions]
Proposition~\ref{prop:finite-state-computability} and
Proposition~\ref{prop:monotonicity} are deliberately elementary.  Their role is
not to supply a difficult existence theorem, but to distinguish three different
kinds of claims: a finite value certified on a specified explored graph, an
exhaustive value certified by a complete enumeration, and an unproved statement
about the full space of lattice representatives.  This distinction is essential
for the numerical tables below.
\end{remark}

\begin{remark}[Simple cubic thickness]
For a unit simple-cubic lattice polygon, the polygonal-thickness scale is affected
both by local corner geometry and by the global distance between non-adjacent
edges.  Under the standard polygonal-thickness convention, an orthogonal corner
contributes the local scale $1/2$, so for a polygon with a right-angle corner the
polygonal thickness is at most $1/2$; equality holds only when no non-adjacent
features force a smaller half-distance.  Thus non-local doubly critical distances
may also be relevant.  In the raw tables below we use the common simple-cubic
normalization
\[
 \Thi_{\rm poly}(P)=1/2,
 \qquad \CRad_D^{\SC}(P)=2D(P),
\]
only as a labelled lattice convention for the displayed representatives.  The
smoothed model avoids building the theory on this lattice-specific convention and
requires an explicit thickness certificate for each rounded representative.
\end{remark}

%==================================================
\section{Smoothed and perturbed lattice representatives}\label{sec:smoothing}
%==================================================

Direct lattice values are useful but still strongly lattice-dependent.  To obtain
quantities closer to the continuous ropelength-windowed invariants, we pass from
lattice polygons to smoothed, slightly perturbed curves.

\begin{definition}[Smoothing-perturbation scheme]
A smoothing-perturbation scheme $\Smooth=(\eps,r,\mathcal A)$ assigns to each
lattice polygon $P$ a nonempty finite set
\[
 \Smooth(P)=\{\gamma_1,\ldots,\gamma_m\}
\]
of embedded $C^{1,1}$ closed curves satisfying:
\begin{enumerate}[label=(\roman*)]
\item each $\gamma_i$ is ambient isotopic to $P$;
\item $\gamma_i$ lies in an $\eps$-tubular neighborhood of $P$;
\item corners of $P$ are replaced by arcs or splines of controlled radius at least
$r$, except where the algorithm certifies a larger local radius;
\item the construction is reproducible from the finite algorithmic data
$\mathcal A$.
\end{enumerate}
\end{definition}

\subsection{A concrete corner-rounding scheme}

We now record one deterministic smoothing scheme which can be implemented without
any auxiliary choices beyond a rounding radius.  Let
$P=(v_0,\ldots,v_{n-1})$ be an oriented unit-edge simple-cubic polygon.  At a
turning vertex $v_i$, put
\[
 a_i=v_i-v_{i-1},\qquad b_i=v_{i+1}-v_i,
\]
where indices are taken modulo $n$.  Thus $a_i$ is the incoming unit direction
and $b_i$ is the outgoing unit direction.  If $a_i\ne b_i$, choose
$0<r<1/2$ and replace the two trimmed subsegments ending at
\[
 q_i^- = v_i-r a_i,\qquad q_i^+=v_i+r b_i
\]
by the quarter-circular arc of radius $r$ in the coordinate plane spanned by
$a_i$ and $b_i$, centered at
\[
 c_i=v_i-r a_i+r b_i.
\]
The case \(a_i=-b_i\), corresponding to an immediate backtracking
edge, cannot occur in an embedded unit-edge lattice polygon after
the usual cancellation of repeated edges.  Thus the only local cases
for an embedded cubic-lattice polygon are a straight vertex
\(a_i=b_i\), where no rounding is performed, and a right-angle vertex
\(a_i\perp b_i\), where the above circular replacement is used.
The arc is chosen with tangent $a_i$ at $q_i^-$ and tangent $b_i$ at $q_i^+$.
Straight vertices are left unchanged.  We call the resulting curve the
\emph{$r$-rounded representative} and denote it by $\gamma_r(P)$.

A finite deterministic scheme is obtained by choosing a finite list
$r_1,\ldots,r_s$ of radii and setting
\[
 \Smooth_{\rm round}(P)=\{\gamma_{r_j}(P):1\le j\le s\},
\]
after discarding any rounded curve that fails an embeddedness certificate.  A
finite perturbed scheme is obtained by first replacing selected vertices by
rational displacements of size at most $\eps$ and then applying the same
rounding rule.  In that case each accepted curve must be accompanied by the
finite perturbation record and the rounding radius used.

\begin{proposition}[Elementary thickness certificate]
Let $P$ be a unit simple-cubic polygon and let $\gamma$ be obtained from $P$ by a
rounded perturbation lying in the $\delta$-neighborhood of $P$.  Suppose that all
non-adjacent local pieces of the smoothing are certified to have mutual distance
at least $1-2\delta$, and that all smoothing arcs have radius at least $r$.
Then
\[
 \Thi(\gamma)\ge \min\left\{r,\frac{1-2\delta}{2}\right\}.
\]
\end{proposition}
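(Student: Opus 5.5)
The plan is to use the Gonzalez--Maddocks characterization of thickness for $C^{1,1}$ curves. For an embedded $C^{1,1}$ closed curve, $\Thi(\gamma)$ is the minimum of two quantities. The first is the local term $1/\kappa_{\max}$, the reciprocal of the essential supremum of curvature. The second is half the length of the shortest doubly critical chord, that is, a chord $[x,y]$ with $x\neq y$ that is perpendicular to $\gamma'$ at both endpoints. Equivalently, $\Thi(\gamma)=\inf_{x,y,z}R(x,y,z)$ over the global radius of curvature. I will take this characterization as the definition, so the proof reduces to bounding each term separately.

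\textbf{Local term.} By construction $\gamma$ consists of straight segments and circular arcs of radius at least $r$. Its curvature is therefore $0$ on segments and at most $1/r$ on arcs, so $\kappa_{\max}\le 1/r$. The one point to check is that $\gamma$ is genuinely $C^{1,1}$. The arcs are tangent to the adjacent segments at $q_i^\pm$, so $\gamma'$ is continuous and Lipschitz with constant at most $1/r$. This gives the bound $1/\kappa_{\max}\ge r$.

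\textbf{Doubly critical chords.} Let $[x,y]$ be a doubly critical chord. I will split into two cases according to whether $x$ and $y$ lie on the same or adjacent local pieces, or on non-adjacent pieces.

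\emph{Non-adjacent pieces.} Here the hypothesis gives $|x-y|\ge 1-2\delta$ directly, so half the chord length is at least $(1-2\delta)/2$.

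\emph{Same or adjacent pieces.} I want to show that no doubly critical chord of length $<2r$ exists, or more precisely that any such chord already has length at least $2r$. A segment, or a segment together with a tangent arc of radius $\ge r$, or a planar quarter-arc configuration, is a convex planar curve whose tangent turns by at most $\pi/2$ across the piece. A chord perpendicular to the tangent at both ends would need the tangent to turn by $\pi$ between its endpoints. That is impossible within one piece or within two consecutive pieces, apart from configurations spanning more structure, which are by definition non-adjacent pieces. The only thing to watch is what "adjacent" means. I will adopt the convention that adjacent pieces are a segment and the arc sharing an endpoint with it. Then the union of two adjacent pieces turns by at most $\pi/2$, so it has no doubly critical chord, and the case is vacuous.

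\textbf{Conclusion and main obstacle.} Combining the two cases gives $\Thi(\gamma)\ge\min\{r,(1-2\delta)/2\}$. The main obstacle is making the "adjacent pieces" case rigorous and consistent with the hypothesis's notion of non-adjacency. After perturbation, consecutive arcs may be separated by segments of length close to $0$, and the perturbed corners need not be exact right angles. I would handle this by measuring the total tangent turning along any chain of pieces not covered by the distance hypothesis, and showing that it is strictly less than $\pi$. This works because each rounded corner turns by approximately $\pi/2$, with a perturbation controlled by $\delta$, and a non-covered chain contains at most one corner. If this fails for some chain, I would instead enlarge the class of "non-adjacent" pieces in the certificate so that the distance hypothesis covers it.
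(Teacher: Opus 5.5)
Your proposal is correct and follows essentially the same route as the paper. Both arguments use the $C^{1,1}$ characterization $\Thi(\gamma)=\min\{\operatorname{MinRad}(\gamma),\tfrac12\operatorname{dcsd}(\gamma)\}$, bound the curvature radius below by $r$ from the arcs, and bound doubly critical chords below by $1-2\delta$ via the separation hypothesis. Your extra check that no doubly critical chord joins two points of the same or adjacent pieces (total tangent turning strictly less than $\pi$) makes explicit a step the paper's proof leaves implicit in the phrase ``relevant self-distances.'' Under your stated adjacency convention the argument is already complete, so the fallback in your last paragraph is not needed.
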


\begin{proof}
For a $C^{1,1}$ curve, thickness is the minimum of the minimum radius of
curvature and one half of the doubly critical self-distance.  The curvature
radius of each circular smoothing arc is at least $r$, and the straight pieces
have infinite curvature radius.  The assumed non-local separation gives a lower
bound $1-2\delta$ for the relevant self-distances.  Taking the minimum gives the
claim.
\end{proof}

\begin{remark}
The preceding proposition is intentionally conservative.  Its purpose is not to
produce sharp thickness values, but to ensure that every reported smoothed value
can be normalized by a certified positive lower bound for thickness.  The
non-adjacent-distance hypothesis is a genuine global condition, not an automatic
consequence of small rounding: densely packed lattice polygons may contain nearby
non-adjacent strands, and then the certificate can fail unless the distance check
is performed explicitly.  Sharper experiments should replace this bound by
interval subdivision or exact segment-arc distance calculations.
\end{remark}

\subsection{A finite thickness-certification pipeline}

For the rounded representatives used here, each curve is a finite union of line
segments and circular arcs.  Hence a practical thickness certificate can be
formulated without referring to a black-box smooth curve routine.  The following
procedure is the one intended for the smoothed experiments.

\begin{enumerate}[label=(C\arabic*)]
\item Decompose $\gamma$ into finitely many elementary pieces: line segments and
circular arcs.  Record their parameter intervals with rational or interval
endpoints.
\item For every adjacent pair, certify the prescribed tangent matching and record
the local curvature radius.  For the circular rounding scheme this lower bound is
at least the chosen radius $r$; for straight pieces it is infinite.
\item For every non-adjacent pair of elementary pieces, compute a certified lower
bound for the distance between the two parameterized pieces.  This can be done by
interval subdivision: on a rectangle of parameters $I\times J$, use interval
arithmetic to bound the range of
\[
 f(s,t)=|\gamma_i(s)-\gamma_j(t)|^2.
\]
If the interval lower bound is inconclusive, subdivide $I\times J$ until the
required tolerance is reached.
\item Take the minimum of the local radius bound and one half of the certified
non-local distance bound.  This gives a certified lower bound
\[
 \Thi(\gamma)\ge \tau_{\rm cert}>0.
\]
The value reported in numerical tables should be computed using the conservative
normalization by $\tau_{\rm cert}$ unless a sharper certified thickness value is
available.
\end{enumerate}

This gives a finite algorithm whenever the curve has a positive separation
margin and the subdivision tolerance is fixed in advance.  In implementations,
one may use standard interval arithmetic packages or a bounding-volume hierarchy
as an acceleration layer; these affect efficiency but not the mathematical
structure of the certificate.

\begin{proposition}[Certified computability for line-and-arc smoothing]
Fix a finite search graph, a finite list of rounding radii, and a fixed interval
subdivision tolerance.  Suppose each accepted rounded curve has a positive
certified separation margin.  Then the corresponding thickness-normalized
smoothed density and compression values are reproducibly computable with a
certified lower bound for thickness.
\end{proposition}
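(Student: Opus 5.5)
The plan is to reduce the statement to a finite composition of finite procedures, in the spirit of Proposition~\ref{prop:finite-state-computability}. First I would check that the index set is finite. The search graph is fixed and finite, and the list of radii $r_1,\ldots,r_s$ is finite. So the set of candidate curves $\gamma_{r_j}(P)$, with $P$ a vertex and $j\le s$, is finite. Each candidate is built deterministically from the integer vertex list of $P$ and the rational radius $r_j$. By the explicit formulas for $q_i^\pm$ and $c_i$, it is a finite union of segments and quarter-circle arcs with rational (or interval) data, which is exactly the decomposition required in step (C1). Any perturbation records are likewise finite rational data, so every accepted curve can be reproduced exactly from the scheme.

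Next I would certify thickness curve by curve through (C1)--(C4). Step (C2) is exact: the tangents match by construction, and the curvature radius is $r_j$ on arcs and infinite on segments. Step (C3) is the only step involving iteration, namely interval subdivision of $f(s,t)=|\gamma_i(s)-\gamma_j(t)|^2$ on parameter rectangles. I expect termination of (C3) to be the main obstacle. I would first argue that $f$ is a polynomial/trigonometric expression whose interval extension converges to the true range as the rectangle width goes to zero, with Lipschitz-type bounds on each piece. The hypothesis of a positive certified separation margin then means the true minimum exceeds the target bound by a fixed amount. Hence, after finitely many subdivisions down to the fixed tolerance, every rectangle's interval lower bound clears the target, so the procedure halts and returns a positive bound. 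Combining these bounds via (C4), or the Elementary thickness certificate proposition above, gives $\Thi(\gamma)\ge\tau_{\rm cert}>0$.

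Finally, for each accepted curve I would evaluate the functionals. Length is exact, namely a finite sum of segment lengths and $\pi r/2$ terms. The quantities $\sD_p$ and $D$ are computed piecewise; the integrals over pairs of pieces are finite because $p>-1$, as in the remark on local integrability. The normalized values are then $\rho_p$, which is scale-free by the Scale invariance lemma, and $D/\tau_{\rm cert}$, which is a certified upper bound for $\CRad_D$ since $\tau_{\rm cert}\le\Thi$. Taking finite minima over the finite set of accepted curves gives the reported profiles. Every step is a deterministic function of fixed finite data and a fixed tolerance, so these values are reproducible, up to the stated numerical precision of the pairwise integrals.
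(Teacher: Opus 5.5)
Your proposal is correct and follows essentially the same route as the paper: finitely many accepted curves and piece pairs, explicit curvature-radius bounds from (C2), interval-subdivision distance bounds from (C3) under the margin hypothesis, combination into $\tau_{\rm cert}$ via (C4), then finite evaluation and minimization. Your remark that $\rho_p$ is unaffected by the normalization while $D/\tau_{\rm cert}$ is only a certified upper bound for $\CRad_D$ is a welcome sharpening; one small correction of emphasis is that with a fixed subdivision tolerance (C3) terminates automatically, so what the positive-margin hypothesis actually supplies is the positivity of the resulting bound at that tolerance, which is exactly how the paper's proof uses it.
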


\begin{proof}
There are finitely many curves and finitely many pairs of elementary pieces for
each curve.  The local curvature-radius bounds are explicit.  The interval
subdivision step gives finite lower bounds for all non-adjacent piece distances
under the stated positive-margin hypothesis and fixed tolerance.  Combining these
bounds gives a positive certificate $\tau_{\rm cert}$, after which the density
and compression quantities are evaluated by finite quadrature or by closed-form
line-and-arc integrals to the prescribed accuracy.
\end{proof}

\begin{definition}[Thickness normalization]
For $\gamma\in\Smooth(P)$ with $\Thi(\gamma)>0$, define
\[
 \widehat\gamma=\frac{1}{\Thi(\gamma)}\gamma.
\]
Then $\Thi(\widehat\gamma)=1$.
\end{definition}

Because $\rho_p$ and $\CRad_D$ are scale invariant, the normalization is mainly a
way to put the curve into the same geometric scale as ropelength theory.  In
particular,
\[
 \rho_p(\widehat\gamma)=\rho_p(\gamma),
 \qquad
 \CRad_D(\widehat\gamma)=\CRad_D(\gamma).
\]

\begin{definition}[Smoothed lattice $p$-density]
For a finite smoothing-perturbation scheme $\Smooth$, define
\[
 \rho_{p,N}^{\Lat,\Smooth}(K)=
 \min\left\{
 \rho_p(\widehat\gamma)\ \middle|\
 P\in\Pol_N^{\Lat}(K),\ \gamma\in\Smooth(P)
 \right\}.
\]
For a seed set $S$ and move system $\Move$, define
\[
 \rho_{p,N}^{\Lat,\Move,\Smooth,\seed}(K;S)=
 \min\left\{
 \rho_p(\widehat\gamma)\ \middle|\
 P\in G_N^{\Lat,\Move}(K;S),\ \gamma\in\Smooth(P)
 \right\}.
\]
\end{definition}

\begin{definition}[Smoothed lattice compression radius]
Similarly,
\[
 \CRad_{D,N}^{\Lat,\Smooth}(K)=
 \min\left\{
 \CRad_D(\widehat\gamma)\ \middle|\
 P\in\Pol_N^{\Lat}(K),\ \gamma\in\Smooth(P)
 \right\},
\]
and
\[
 \CRad_{D,N}^{\Lat,\Move,\Smooth,\seed}(K;S)=
 \min\left\{
 \CRad_D(\widehat\gamma)\ \middle|\
 P\in G_N^{\Lat,\Move}(K;S),\ \gamma\in\Smooth(P)
 \right\}.
\]
\end{definition}

\begin{theorem}[Finite computability of smoothed profiles]\label{thm:smoothed-finite-computability}
Fix $K$, $N$, a finite seed-generated graph $G_N^{\Lat,\Move}(K;S)$, and a finite
smoothing-perturbation scheme $\Smooth$.  Then
\[
 \rho_{p,N}^{\Lat,\Move,\Smooth,\seed}(K;S),
 \qquad
 \CRad_{D,N}^{\Lat,\Move,\Smooth,\seed}(K;S)
\]
are finite computable quantities up to numerical precision.  If the full finite
set $\Pol_N^{\Lat}(K)$ is enumerated, the corresponding global smoothed quantities
are also computable.
\end{theorem}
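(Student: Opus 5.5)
The plan is to reduce the theorem to a finite minimum of finitely many individually computable numbers, in the same way as Proposition~\ref{prop:finite-state-computability}. First I will use finiteness of the index set. The seed-generated graph $G_N^{\Lat,\Move}(K;S)$ is finite by hypothesis; in any case, there are only finitely many unit-edge lattice polygons of length at most $N$ modulo lattice translations. By the definition of a smoothing-perturbation scheme, each $\Smooth(P)$ is a nonempty finite set. Hence the set of pairs $(P,\gamma)$ with $P\in G_N^{\Lat,\Move}(K;S)$ and $\gamma\in\Smooth(P)$ is finite and nonempty, since $S\neq\emptyset$. So both defining minima are minima of finitely many real numbers, and they are attained provided each individual value is a finite real number.

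Second, I will show that each value is finite. Each $\gamma\in\Smooth(P)$ is an embedded $C^{1,1}$ closed curve, so $0<\Thi(\gamma)<\infty$, $0<\Len(\gamma)<\infty$, and $\widehat\gamma$ is defined. By the scale-invariance lemma,
\[
 \rho_p(\widehat\gamma)=\rho_p(\gamma),\qquad \CRad_D(\widehat\gamma)=\CRad_D(\gamma).
\]
This means normalization never has to be carried out exactly; only $\Thi(\gamma)$ enters, and only through $\CRad_D$. For $\sD_p$ with $p>-1$ (including $p=0$ and $p=\infty$), the remark on local integrability gives $0<\sD_p(\gamma)<\infty$, because $\gamma$ is embedded and not a point. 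For a general $D$ I will assume, as the examples suggest, that $D$ is a finite positive Euclidean size functional on such curves. The ratios are then finite positive reals.

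Third, I will address computability, which I expect to be the only real obstacle, namely the thickness. For the rounded schemes of Section~\ref{sec:smoothing}, each $\gamma$ is a finite union of segments and circular arcs recorded by the finite data $\mathcal A$. The integrals defining $\sD_p$ then decompose into finitely many piece-pair integrals. These can be evaluated in closed form, or by quadrature with certified error; the diagonal singularity for $p<0$ or $p=0$ is handled on same-piece pairs by explicit integration of $|s-t|^p$ or $\log|s-t|$. The diameter is a finite maximization over piece pairs. For $\Thi(\gamma)$ I will invoke the certified computability proposition for line-and-arc smoothing, carried out through steps (C1)--(C4). The hypothesis needed is a positive separation margin, and this holds automatically for an embedded curve since the non-adjacent piece distances are bounded below by a positive constant; subdivision therefore terminates at any fixed tolerance. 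The curvature part is exact, because it is the minimum radius among the arcs.

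The subtle point I will state explicitly is that this yields $\Thi(\gamma)$, and hence $\CRad_D$, to any prescribed precision, or conservatively through $\tau_{\rm cert}$. That is exactly the sense of ``computable up to numerical precision'' in the statement. Finally, the minimum of finitely many quantities, each approximable to precision $\eta$, is itself approximable to precision $\eta$. This gives the seed-generated claim. The global claim follows verbatim with the explicitly enumerated finite set $\Pol_N^{\Lat}(K)$ in place of the graph's vertex set.
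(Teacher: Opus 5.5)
Your skeleton (finitely many vertices, finitely many curves in each $\Smooth(P)$, hence a minimum over a finite nonempty set) is exactly the paper's proof. The paper stops there and simply takes the evaluation of each functional on each curve as given up to numerical precision.

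The trouble is the extra step you single out as the real obstacle: steps (C1)--(C4) do not compute $\Thi(\gamma)$ to any prescribed precision. They produce a lower bound $\tau_{\rm cert}\le\min\{r_{\min},\tfrac12\delta\}$, where $\delta$ is the minimal distance between non-adjacent elementary pieces. But $\delta$ is not the doubly critical self-distance, so refining the subdivision sharpens the estimate of $\delta$ without ever closing the gap $\Thi(\gamma)-\tau_{\rm cert}$. Take the unit square rounded with radius $r<1/2$. The bottom and right sides are non-adjacent pieces (separated by a corner arc) at distance $\sqrt2\,r$. So the pipeline returns at most $r/\sqrt2$, whereas $\Thi=r$.

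Hence (C1)--(C4) yield only the upper bound $D(\gamma)/\tau_{\rm cert}\ge\CRad_D(\gamma)$. Your fallback ``conservatively through $\tau_{\rm cert}$'' replaces the quantity being minimized by a different one rather than approximating it. The minima in the theorem are defined with the true thickness.

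To make the added step correct, you need an actual algorithm for $\Thi$ on line-and-arc curves. The minimal curvature radius is exactly the minimum arc radius. The doubly critical pairs on each pair of pieces satisfy explicit equations, linear in the segment parameters and trigonometric in the arc angles. So the doubly critical self-distance can be computed to any precision and inserted into $\Thi=\min\{\operatorname{MinRad},\tfrac12\operatorname{dcsd}\}$. Alternatively, drop the claim and stay at the paper's level, where ``up to numerical precision'' presupposes the per-curve evaluations.

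Note also that your computability discussion covers only line-and-arc schemes. The theorem allows an arbitrary finite scheme (condition (iii) permits splines), so for general $\Smooth$ you are relying on that same presupposition anyway.
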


\begin{proof}
The finite graph has finitely many vertices, and each vertex has finitely many
smoothed representatives.  Thus the feasible set is finite.  Evaluating the
chosen geometric functional on each element and taking a finite minimum gives the
claim.
\end{proof}

\begin{remark}[Why perturbation is not cosmetic]
The perturbation step is not merely a numerical trick.  It separates the
geometric invariant being approximated from special symmetries and angular
constraints of the simple cubic lattice.  A $90^\circ$ corner in a lattice polygon
imposes a fixed local curvature scale, while a smoothed $C^{1,1}$ perturbation
allows the experiment to probe nearby continuous representatives of the same
knot type.
\end{remark}

%==================================================
\section{Length windows and discrete-to-continuous comparison}\label{sec:length-windows}
%==================================================

Let $\lambda\ge1$.  The natural lattice analogue of a ropelength window is the
length cap
\[
 N\le \lambda n_{\Lat}(K).
\]
Since $N$ is integral and BFACF moves in the simple cubic lattice usually change
length by even numbers, the effective levels are discrete.

\begin{definition}[Windowed smoothed lattice profiles]
For the full lattice profile, define
\[
 \rho_{p,\lambda}^{\Lat,\Smooth}(K)=
 \min_{N\in\mathbb Z,\,N\le \lambda n_{\Lat}(K)}
 \rho_{p,N}^{\Lat,\Smooth}(K)
\]
and
\[
 \CRad_{D,\lambda}^{\Lat,\Smooth}(K)=
 \min_{N\in\mathbb Z,\,N\le \lambda n_{\Lat}(K)}
 \CRad_{D,N}^{\Lat,\Smooth}(K).
\]
For a seed-generated computation from seeds of length $N_0$, the minimum is
taken only over admissible levels
\[
 N\in\mathcal N_{\Move}(N_0),\qquad N\le \lambda N_0,
\]
and the full level-$N$ set is replaced by $G_N^{\Lat,\Move}(K;S)$.  In the
simple-cubic BFACF pilot computations this means $N=N_0,N_0+2,N_0+4,\ldots$.
\end{definition}

\begin{problem}[Thickness-controlled lattice-smoothing approximation]
Let $K$ be a knot type and let $\lambda\ge1$.  Suppose $\Lat_h=h\ZZ^3$ is a
sequence of cubic lattices with mesh $h\to0$.  Instead of using the dimensionless
cap $N\le \lambda n_{\Lat_h}(K)$, choose caps scaled to the continuous ropelength
window, for example
\[
 N_h(\lambda)=\left\lfloor \frac{\lambda\Rop(K)}{h}\right\rfloor
\]
with the parity adjustment required by the move system.  Find hypotheses on
smoothing schemes $\Smooth_h$, thickness certificates, and lattice approximation
under which
\[
 \rho_{p,N_h(\lambda)}^{\Lat_h,\Smooth_h}(K)
 \longrightarrow \rho_{p,\lambda}^{\operatorname{rop}}(K)
\]
and similarly for $\CRad_D$.
\end{problem}

\begin{remark}
This is a guiding approximation problem, not a result of the present paper.  The
unconditional statements below concern finite computability and monotonicity at a
fixed lattice and fixed search protocol.  The limit problem requires independent
control of lattice approximation, smoothing, thickness certificates, and
optimization error.
\end{remark}

\begin{problem}[Effective error bounds]
Find explicit functions $E_\rho(h,N,\eps,r)$ and $E_D(h,N,\eps,r)$ such that
smoothed lattice profiles differ from the continuous ropelength-windowed profiles
by at most these errors under stated reach and thickness hypotheses.
\end{problem}

%==================================================
\section{Numerical evaluation of the functionals}
%==================================================

This section records the computational formulas used in the experiments.

\subsection{$p=2$}
For any curve $\gamma$ of length $L$ with arclength centroid
\[
 \bar x=\frac1L\int_\gamma x\,ds,
\]
one has
\[
 \sD_2(\gamma)^2
 =\frac1{L^2}\int_\gamma\int_\gamma |x-y|^2\,ds_x\,ds_y
 =2\Rg(\gamma)^2,
\]
where
\[
 \Rg(\gamma)^2=\frac1L\int_\gamma |x-\bar x|^2\,ds.
\]
Thus $p=2$ is especially stable numerically.

\subsection{$p=\infty$}
For $p=\infty$,
\[
 \sD_\infty(\gamma)=\diam(\gamma).
\]
For a polygonal curve, the diameter is attained by vertices.  For a smoothed
curve, the diameter can be approximated by sufficiently fine arclength sampling,
then certified by interval or subdivision bounds if needed.

\subsection{General $p$}
For a polygonal curve, the double integral decomposes into a sum over pairs of
edges:
\[
 \int_P\int_P |x-y|^p\,ds_x\,ds_y
 =\sum_{e,f}\int_e\int_f |x-y|^p\,ds_x\,ds_y.
\]
For $p$ outside the stable closed-form range, numerical quadrature can be applied
edge-pairwise.  For $-1<p<0$, special care is needed near the diagonal pairs
$e=f$ because of the integrable singularity.

\subsection{Thickness after smoothing}
The smoothed representative $\gamma$ is required to be $C^{1,1}$.  Its thickness
is
\[
 \Thi(\gamma)=\min\{\operatorname{MinRad}(\gamma),\tfrac12\operatorname{dcsd}(\gamma)\},
\]
where $\operatorname{MinRad}$ is the minimum radius of curvature and
$\operatorname{dcsd}$ denotes doubly critical self-distance.  Numerically, both
quantities should be computed with certified lower bounds when the result is to
be used as more than exploratory evidence.

%==================================================
\section{Preliminary numerical sanity checks}\label{sec:raw-pilots}
%==================================================

This section supplies preliminary numerical sanity checks of the definitions before the full
smoothed BFACF search is run.  The values in this section are \emph{raw lattice}
values: no perturbation, corner smoothing, or thickness renormalization is applied.
Thus they are not yet the final smoothed profiles.  They demonstrate that the
functionals are directly computable on explicit lattice representatives and that
increasing the length level can change the best observed density.

For a unit-edge polygon $P$, the $p=2$ value was computed from
\[
 \sD_2(P)^2=\frac{2}{\ell(P)}\int_P |x-\bar x|^2\,ds,
 \qquad
 \bar x=\frac{1}{\ell(P)}\int_P x\,ds.
\]
For the polygonal segment from $a$ to $a+d$, where $|d|=1$, the required exact
integrals are
\[
 \int_0^1 (a+td)\,dt=a+\frac{d}{2},
 \qquad
 \int_0^1 |a+td|^2\,dt=|a|^2+a\cdot d+\frac13.
\]
The diameter was computed as the maximum distance between vertices.  In the raw
simple-cubic rows we use the labelled lattice-thickness convention
$\Thi_{\rm poly}=1/2$, so that $\CRad_D^{\SC}(P)=2D(P)$.  These raw compression
values are therefore convention-dependent placeholders for the later smoothed,
certified-thickness computation.

The 24-step trefoil seed used below is the cubic-lattice coordinate model
available from KnotPlot~\cite{KnotPlotCoords}; Diao proved that no nontrivial
simple-cubic lattice knot has fewer than 24 steps, and that the 24-step
nontrivial examples are trefoils~\cite{DiaoMinimal,DiaoSmallest}.
The length-26 trefoil row is obtained from this seed by replacing the edge
\[
 (0,2,1)\to (-1,2,1)
\]
with the plaquette detour
\[
 (0,2,1)\to (0,3,1)\to (-1,3,1)\to (-1,2,1).
\]
This is a single embedded local length-increasing detour and hence remains in the
same knot type.  It is not an exhaustive level-26 computation.

\begin{table}[ht]
\centering
\small
\begin{tabular}{llrrrrrr}
\toprule
$K$ & representative & $\ell$ & $\sD_2^2$ & $\sD_2$ & $\sD_\infty$ & $\rho_2$ & $\rho_\infty$ \\
\midrule
$0_1$ & unit square & 4 & $2/3$ & 0.8165 & 1.4142 & 4.8990 & 2.8284 \\
$0_1$ & $1\times2$ rectangle & 6 & $3/2$ & 1.2247 & 2.2361 & 4.8990 & 2.6833 \\
$3_1$ & 24-step seed & 24 & $643/144$ & 2.1131 & 4.1231 & 11.3576 & 5.8209 \\
$3_1$ & one $+2$ plaquette detour & 26 & $5359/1014$ & 2.2989 & 4.8990 & 11.3097 & 5.3072 \\
\bottomrule
\end{tabular}
\caption{Pilot raw-lattice density values.  These entries are candidate values,
not exhaustive minima.  The length-26 row gives an upper bound for the level-26
raw-lattice profile and shows that the length filtration can lower the observed
$\rho_2$ and $\rho_\infty$ values.}
\label{tab:pilot-density}
\end{table}

\begin{table}[ht]
\centering
\small
\begin{tabular}{llrrrr}
\toprule
$K$ & representative & $\ell$ & $\CRad_2^{\SC}=2\sD_2$ & $\CRad_\infty^{\SC}=2\sD_\infty$ & status \\
\midrule
$0_1$ & unit square & 4 & 1.6330 & 2.8284 & exact raw value \\
$0_1$ & $1\times2$ rectangle & 6 & 2.4495 & 4.4721 & exact raw value \\
$3_1$ & 24-step seed & 24 & 4.2262 & 8.2462 & seed value \\
$3_1$ & one $+2$ plaquette detour & 26 & 4.5978 & 9.7980 & non-exhaustive candidate \\
\bottomrule
\end{tabular}
\caption{Pilot raw-lattice compression-radius values under the standard
$\Thi_{\rm poly}=1/2$ convention.  The one-detour row worsens the raw compression
values even though it improves the displayed density values in
Table~\ref{tab:pilot-density}; this illustrates that density and compactness are
competing objectives.  Smoothed profiles will replace these raw values by
certified thickness-normalized $C^{1,1}$ values.}
\label{tab:pilot-compression}
\end{table}

\begin{remark}
The density and compression objectives need not improve simultaneously.  In
Table~\ref{tab:pilot-density}, the length-26 detour lowers the observed
$\rho_2$ and $\rho_\infty$ relative to the displayed 24-step seed, while
Table~\ref{tab:pilot-compression} shows that the raw compression radii increase.
This supports treating density and compression as complementary profiles on the
same filtered graph rather than as interchangeable measurements.
\end{remark}

\subsection{One complete smoothing sanity check: the rounded square}

As a minimal check of the smoothing-normalization pipeline, take the unit square
unknot and replace each corner by a quarter circle of radius $r=1/4$.  The
resulting planar $C^{1,1}$ curve consists of four line segments and four circular
arcs.  Its length is
\[
 L=4(1-2r)+2\pi r=2+\frac{\pi}{2}\approx 3.5708.
\]
For this convex rounded square, the curvature radius is $r$ on the arcs and the
opposite-side separation is larger than $2r$, so the elementary certificate gives
\[
 \Thi(\gamma)\ge r=1/4.
\]
The following values are obtained by direct line-and-arc quadrature.  They are
included only to show the full pipeline ``smoothing -- thickness certificate --
normalization -- evaluation'' in a completely transparent example.

\begin{table}[ht]
\centering
\small
\begin{tabular}{lrrrrrr}
\toprule
example & $L$ & $\tau_{\rm cert}$ & $\sD_2$ & $\sD_\infty$ & $\rho_2$ & $\CRad_2$ \\
\midrule
rounded square, $r=1/4$ & 3.5708 & 0.2500 & 0.7800 & 1.2071 & 4.5780 & 3.1199 \\
\bottomrule
\end{tabular}
\caption{A complete smoothing sanity check for the unknot.  The compression value
uses the certified lower bound $\tau_{\rm cert}=1/4$.  In general this gives the
conservative certified upper value $\sD_2/\tau_{\rm cert}$ for the true
compression radius; in this convex rounded-square example the certificate is
sharp, since the actual thickness is $r=1/4$.}
\label{tab:rounded-square}
\end{table}

This example is not intended to optimize the unknot.  Its role is to demonstrate
that the smoothed framework is not merely formal: for line-and-arc curves, the
quantities can be evaluated after a concrete thickness certificate has been
recorded.

\subsection{First length-filtration profile for the trefoil}

We next record a small length-filtration experiment.  Starting from the same
24-step trefoil seed, we generated candidates by applying only positive
BFACF-type $+2$ plaquette detours and retaining embedded polygons.  Thus the
computation below is deliberately restricted: it is neither an exhaustive BFACF
cap search nor an enumeration of all trefoil lattice polygons at the indicated
lengths.  Its purpose is to test whether the proposed functionals exhibit a
visible length-filtration effect already in a reproducible pilot search.

For each exact length level $N=24,26,28,30,32$, Table~\ref{tab:trefoil-exact-N}
records the number of generated candidates at that exact length and the best raw
values among those candidates.  Table~\ref{tab:trefoil-filtered-N} records the
corresponding filtered values, namely the best values among all generated
candidates of length at most $N$.

\begin{table}[ht]
\centering
\small
\begin{tabular}{rrrrrr}
\toprule
exact length $N$ & generated candidates & best $\rho_2$ & best $\rho_\infty$ & min $\CRad_2$ & min $\CRad_\infty$ \\
\midrule
24 & 1 & 11.3576 & 5.8209 & 4.2262 & 8.2462 \\
26 & 47 & 11.3097 & 5.3072 & 4.1827 & 8.2462 \\
28 & 1286 & 10.8903 & 4.8742 & 4.1886 & 8.2462 \\
30 & 27018 & 10.3115 & 4.5227 & 4.2153 & 8.2462 \\
32 & 485184 & 9.7147 & 4.2385 & 4.2420 & 8.2462 \\
\bottomrule
\end{tabular}
\caption{Exact-level raw-lattice pilot profile for the trefoil.  Candidates are
obtained from a fixed 24-step seed by positive $+2$ plaquette detours only.  The
values are therefore non-exhaustive candidate values.}
\label{tab:trefoil-exact-N}
\end{table}

\begin{table}[ht]
\centering
\small
\begin{tabular}{rrrrrr}
\toprule
cap $N$ & cumulative & best $\rho_2$ & best $\rho_\infty$ & best $\CRad_2$ & best $\CRad_\infty$ \\
\midrule
24 & 1 & 11.3576 & 5.8209 & 4.2262 & 8.2462 \\
26 & 48 & 11.3097 & 5.3072 & 4.1827 & 8.2462 \\
28 & 1334 & 10.8903 & 4.8742 & 4.1827 & 8.2462 \\
30 & 28352 & 10.3115 & 4.5227 & 4.1827 & 8.2462 \\
32 & 513536 & 9.7147 & 4.2385 & 4.1827 & 8.2462 \\
\bottomrule
\end{tabular}
\caption{Filtered raw-lattice pilot profile for the trefoil.  The entries are
minima over the generated candidates with length at most $N$.  The column
``cumulative'' gives the number of generated candidates with $\ell\le N$.
The monotone behavior follows from the nested feasible sets, while the numerical
improvement measures the effect of increasing the length cap within this
restricted search.}
\label{tab:trefoil-filtered-N}
\end{table}

The density profiles improve clearly over this range.  In the filtered profile,
$\rho_2$ decreases from $11.3576$ at $N=24$ to $9.7147$ at $N=32$, and
$\rho_\infty$ decreases from $5.8209$ to $4.2385$.  Thus the length filtration
reveals representatives that are more spread out, in the sense relevant to the
density functionals, than the minimal-length seed.  The extension from $N=30$ to
$N=32$ is especially useful as a stress test: the generated exact-level set grows
from $27018$ to $485184$ candidates, but the best density values continue to
improve noticeably.

The compression profiles behave differently.  The best observed $\CRad_2$ is
already achieved by length $26$ in this restricted search, and the best observed
$\CRad_\infty$ remains the seed value through $N=32$.  The exact-level values
also show that adding length does not automatically improve compression: the
best exact-level $\CRad_2$ increases mildly after $N=26$.  This is consistent
with the fact that lowering density usually favors spatially larger
representatives, whereas compression radius penalizes spatial size after
thickness normalization.  The two families of functionals therefore give
complementary, rather than redundant, measurements on the same filtered move
graph.

\subsection{Global minimal-layer merge hierarchies underlying the certificates}
\label{subsec:global-merge-hierarchies}

The path-level computations below should now be read inside the complete
minimal-layer BFACF hierarchies determined in the companion paper
\cite{OzawaDiscrete}.  For $4_1$, the $152$ minimal classes form four type-$0$
components of sizes $58,58,18,18$ at $N=30$, and all four merge at $N=32$.
For $6_3$, the $148$ minimal classes form twelve type-$0$ components of sizes
$39,39,10,10,9,9,7,7,6,6,3,3$ at $N=40$.  We use the companion paper's
labelling
\[
A_{39},A_{10},A_9,A_7,A_6,A_3,
\qquad
B_{39},B_{10},B_9,B_7,B_6,B_3,
\]
where reflection exchanges $A_r$ and $B_r$.  At $N=42$ the six $A_r$
components form one branch $A$ and the six $B_r$ components form its mirror
branch $B$; each branch contains $74$ minimal classes and $12337$ bounded
states, and the two branches are disjoint.  They merge at $N=44$.  Thus the
length-only merge hierarchies are
\[
4\longrightarrow1 \quad (4_1),
\qquad
12\longrightarrow2\longrightarrow1 \quad (6_3).
\]
The certificates studied in the next two subsections connect opposite mirror
branches and therefore realize the largest birth-level barrier in each example.
This distinction is useful for the present paper: the length filtration is known
globally on the minimal layer, and we can therefore attach geometric data to all
of its leaves and internal nodes.  The next subsection carries out this
calculation before returning to selected path certificates.

\begin{table}[ht]
\centering
\small
\begin{tabular}{lcp{7.0cm}c}
\toprule
knot & minimal length & birth-level partition and intermediate merge & final merge\\
\midrule
$4_1$ & 30 & $58+58+18+18$; all four components persist to the next admissible level & $N=32$\\
$6_3$ & 40 & $39+39+10+10+9+9+7+7+6+6+3+3$; $12\to2$ at $N=42$ & $N=44$\\
\bottomrule
\end{tabular}
\caption{Complete minimal-layer BFACF component hierarchies from the companion
paper.  Reflections are retained in the quotient.}
\label{tab:global-minimal-merge-hierarchies}
\end{table}

\subsection{Exact raw geometric profiles on the merge trees}
\label{subsec:tree-geometric-profiles}

The complete merge trees allow the geometric vertex functionals to be evaluated
componentwise rather than only along selected paths.  If $C$ is a component
represented by a node of a minimal-layer merge tree, define its
\emph{minimal support} by
\[
 \mathcal M(C)=C\cap \Pol_{n_{\SC}(K)}^{\SC}(K).
\]
When the whole bounded component $C\subset G_N^{\SC,\BFACF}(K)$ has been
exhaustively enumerated, we also use its \emph{bounded profile}.  For a raw
vertex functional $F\in\{\rho_2,\rho_\infty,\CRad_2^{\SC},\CRad_\infty^{\SC}\}$
we record the minimum, maximum, mean, and median on these finite sets.  The
complete state-level values and the four summary statistics are included in the
supplementary CSV files; the tables below display the minima, which are the
quantities most directly comparable with the filtered optimization profiles.

\begin{proposition}[Exact raw profiles on the computed merge-tree nodes]
\label{prop:exact-tree-profiles}
Under the mirror-sensitive quotient and the labelled raw convention
$\Thi_{\rm poly}=1/2$, the following values are exact for the stated finite
sets.
\begin{enumerate}[label=(\roman*)]
\item For $4_1$, all $152$ minimal states and the entire $19618$-state bounded
component at $N=32$ are evaluated.
\item For $6_3$, all $148$ minimal states and both entire $12337$-state bounded
components $A$ and $B$ at $N=42$ are evaluated.
\item Reflection preserves all four functionals.  Consequently each pair
$A_r,B_r$ has the same complete distribution, and the two level-$42$ branches
$A$ and $B$ have the same complete distribution, although $A\cap B=\varnothing$
at $N=42$.
\item The minimal-support profile of the final $6_3$ root is exact, since its
support is the union of all $148$ minimal states.  No exhaustive bounded-state
profile is claimed for the full $N=44$ root.
\end{enumerate}
\end{proposition}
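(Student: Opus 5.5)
The proposition is largely a finite-state certification statement, so I will treat it in two layers: a computational layer for items (i), (ii) and (iv), and a genuine mathematical argument for item (iii).

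\textbf{Finite sets.} For (i) and (ii), I will take the finite vertex sets supplied by the companion paper~\cite{OzawaDiscrete,OzawaDiscreteData}: the $152$ minimal $4_1$ classes, the $148$ minimal $6_3$ classes, and the exhaustively enumerated bounded components ($19618$ states for $4_1$ at $N=32$; $12337$ states for each of $A$, $B$ at $N=42$). On each state I will evaluate the four raw functionals exactly.
\begin{itemize}
\item $\sD_2^2$ is computed by the closed-form segment integrals of Section~\ref{sec:raw-pilots}, using $\sD_2^2=2\Rg^2$. With integer vertices this gives a rational number, so $\rho_2=\ell/\sD_2$ and $\CRad_2^{\SC}=2\sD_2$ are exact algebraic numbers.
\item $\sD_\infty$ is the maximum vertex-to-vertex distance, which is the square root of an integer.
\end{itemize}
Hence each min, max, mean and median is an exact finite computation, in the sense of Proposition~\ref{prop:finite-state-computability}. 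The only inputs to verify are that these sets are the stated ones. I will cite the companion enumeration and recheck the state counts and the BFACF closure under the cap.

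\textbf{Item (iii).} Let $\sigma$ be a reflection of $\ZZ^3$, for instance $(x,y,z)\mapsto(-x,y,z)$. It is a Euclidean isometry, so it preserves:
\begin{itemize}
\item length $\ell$;
\item arclength measure and all pairwise distances, hence $\sD_p$ for every $p$, including $\diam$;
\item the labelled thickness convention $\Thi_{\rm poly}=1/2$.
\end{itemize}
Therefore all four functionals are $\sigma$-invariant. Next, $\sigma$ normalizes $\Isomp(\ZZ^3)$, so it induces a well-defined bijection on classes modulo $\Isomp$. It sends type $K$ to the mirror type, and since $4_1$ and $6_3$ are amphichiral it maps $\Pol_N^{\SC}(K)$ to itself. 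A BFACF move is carried to a BFACF move, so $\sigma$ is a graph automorphism of $G_N^{\SC,\BFACF}(K)$ preserving length levels. It therefore permutes bounded components at each $N$.

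By the companion labelling, reflection exchanges $A_r\leftrightarrow B_r$, and hence $A\leftrightarrow B$ at $N=42$. A bijection that preserves each functional transports the full multiset of values, so each paired branch has identical distributions. This argument does not require $A\cap B\neq\varnothing$; disjointness is simply recorded from the companion paper.

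\textbf{Item (iv).} The $N=44$ root is the single component containing all twelve birth components. Its minimal support is therefore $\Pol_{40}^{\SC}(6_3)$, which consists of the $148$ minimal states already evaluated, so its minimal-support profile is exact by (ii). Nothing is asserted about its bounded states beyond $N=40$, so no further proof is needed there.

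\textbf{Main obstacle.} The step I expect to be delicate is justifying that $\sigma$ acts compatibly with the companion labels. It must genuinely send $A_r$ to $B_r$, rather than to $A_r$ itself or some other component. I would check this directly by reflecting a representative of each $A_r$, reducing it to canonical form modulo $\Isomp$, and locating it in the $B_r$ list. I would also confirm that the sizes match.
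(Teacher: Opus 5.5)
Your proposal is correct and follows essentially the same route as the paper's proof. Both arguments rest on three points: the state sets are the finite ones enumerated in the companion computation, the four raw functionals are evaluated directly on every canonical polygon, and reflection preserves arclength and Euclidean distance, so it transports the full distributions between mirror-paired components. Your additional check that the reflection normalizes $\Isomp(\ZZ^3)$, carries BFACF moves to BFACF moves, and therefore permutes bounded components (sending $A_r$ to $B_r$ and $A$ to $B$) spells out what the paper simply takes from the companion labelling, but the argument is the same.
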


\begin{proof}
The state sets are finite and were enumerated with the same BFACF moves and
canonicalization as in the companion merge-tree computation.  The $4_1$
level-$32$ enumeration contains $19618$ states, including all $152$ minimal
states.  For $6_3$, the two disjoint level-$42$ enumerations contain $12337$
states each and together contain all $148$ minimal states.  The four raw
functionals are then evaluated directly on every canonical polygon.  Finally,
reflection preserves arclength measure and Euclidean distance, hence preserves
$\sD_2$, diameter, and therefore all four displayed functionals.  This proves
the equality of the full mirror-paired distributions.
\end{proof}

For $4_1$, the two mirror pairs already exhibit a density--compression trade-off
at the birth level.  The $58$-state components have smaller density minima, while
the $18$-state components have smaller compression minima.  After the four
components merge at $N=32$, new length-$32$ states improve both density minima,
but neither compression minimum improves.

\begin{table}[ht]
\centering
\small
\begin{tabular}{lrrrrr}
\toprule
$4_1$ node/set & states & $\min\rho_2$ & $\min\rho_\infty$ & $\min\CRad_2^{\SC}$ & $\min\CRad_\infty^{\SC}$\\
\midrule
$C_1$ or $C_2$ minimal support & 58 & 12.4259 & 6.3960 & 4.4930 & 8.9443\\
$C_3$ or $C_4$ minimal support & 18 & 13.3191 & 7.2761 & 4.3604 & 7.4833\\
root minimal support & 152 & 12.4259 & 6.3960 & 4.3604 & 7.4833\\
root bounded component, $N=32$ & 19618 & 11.8473 & 5.8424 & 4.3604 & 7.4833\\
\bottomrule
\end{tabular}
\caption{Exact raw minima on the complete $4_1$ merge tree.  The mirror pairs
$C_1,C_2$ and $C_3,C_4$ have identical complete distributions.  Full
min/max/mean/median summaries are supplied in
\texttt{tree\_geometric\_profile\_summary.csv}.}
\label{tab:fig8-tree-geometric-minima}
\end{table}

For $6_3$, each row below represents both members of a mirror pair.  Different
birth components optimize different functionals: for example $A_7/B_7$ gives
the smallest birth-level $\rho_2$, $A_9/B_9$ the smallest birth-level
$\rho_\infty$, and $A_6/B_6$ the smallest birth-level $\CRad_2^{\SC}$.  Thus no
single birth component is geometrically optimal for all four observables.

\begin{table}[ht]
\centering
\small
\begin{tabular}{lrrrrr}
\toprule
$6_3$ node/set & states & $\min\rho_2$ & $\min\rho_\infty$ & $\min\CRad_2^{\SC}$ & $\min\CRad_\infty^{\SC}$\\
\midrule
$A_{39}$ or $B_{39}$ & 39 & 15.6243 & 7.8446 & 4.9048 & 10.0000\\
$A_{10}$ or $B_{10}$ & 10 & 15.7694 & 7.8446 & 4.9363 & 10.1980\\
$A_9$ or $B_9$ & 9 & 15.2841 & 7.3030 & 5.0186 & 10.3923\\
$A_7$ or $B_7$ & 7 & 14.9132 & 7.4278 & 5.1494 & 10.1980\\
$A_6$ or $B_6$ & 6 & 16.0590 & 7.8446 & 4.8597 & 10.0000\\
$A_3$ or $B_3$ & 3 & 15.5651 & 7.8446 & 5.0790 & 10.1980\\
$A$ or $B$: minimal support, $N=42$ & 74 & 14.9132 & 7.3030 & 4.8597 & 10.0000\\
$A$ or $B$: bounded component, $N=42$ & 12337 & 14.1660 & 6.5593 & 4.8464 & 9.7980\\
root minimal support, $N=44$ & 148 & 14.9132 & 7.3030 & 4.8597 & 10.0000\\
\bottomrule
\end{tabular}
\caption{Exact raw minima on the computed nodes of the complete $6_3$ merge
tree.  Mirror-related rows have identical full distributions.  The bounded
$N=44$ root has not been exhaustively enumerated, so only its exact minimal
support is listed.}
\label{tab:six3-tree-geometric-minima}
\end{table}

The level-$42$ merge therefore opens genuinely new geometric regimes: relative
to the $74$ minimal states in either branch, the bounded branch improves the
four minima
\[
(\rho_2,\rho_\infty,\CRad_2^{\SC},\CRad_\infty^{\SC})
=(14.9132,7.3030,4.8597,10.0000)
\]
to
\[
(14.1660,6.5593,4.8464,9.7980).
\]
At the same time, the two disconnected branches $A$ and $B$ remain completely
indistinguishable by the distributions of these reflection-invariant scalar
observables.  Thus the merge tree records reconfiguration information that is
not recoverable from density or compression alone.

\subsection{Raw profiles along the figure-eight merge certificate}
\label{subsec:fig8-merge-profiles}

We next record how the same raw density and compression quantities behave along
an explicit merge certificate coming from the companion lattice-filtered
move-graph computation~\cite{OzawaDiscrete}; the verified path data are archived
in~\cite{OzawaDiscreteData}.  The path connects a
supplied $30$-edge simple-cubic figure-eight seed $\omega$ to its reflected
mirror seed $\omega!$ under the length bound $N=32$.  It has $21$ states and
$20$ BFACF moves and passes through a $32$-edge connecting state $\eta$.  The
seed $\omega$ reaches $\eta$ in $5$ BFACF moves, while the mirror seed
$\omega!$ reaches the same state in $15$ BFACF moves.  As in the preceding raw
lattice tables, we use
\[
 \rho_2(P)=\frac{\ell(P)}{\sD_2(P)},\qquad
 \rho_\infty(P)=\frac{\ell(P)}{\diam(P)},
\]
and the labelled simple-cubic convention
\[
 \CRad_2^{\SC}(P)=2\sD_2(P),\qquad
 \CRad_\infty^{\SC}(P)=2\diam(P).
\]

\begin{table}[ht]
\centering
\small
\begin{tabular}{llrrrrrrr}
\toprule
state & role & $\ell$ & $\sD_2$ & $\sD_\infty$ & $\rho_2$ & $\rho_\infty$ & $\CRad_2^{\SC}$ & $\CRad_\infty^{\SC}$ \\
\midrule
$s_0$ & seed $\omega$ & 30 & 2.3338 & 4.5826 & 12.8545 & 6.5465 & 4.6676 & 9.1652 \\
$s_5$ & connecting state $\eta$ & 32 & 2.3532 & 4.5826 & 13.5982 & 6.9830 & 4.7065 & 9.1652 \\
$s_{20}$ & mirror seed $\omega!$ & 30 & 2.3338 & 4.5826 & 12.8545 & 6.5465 & 4.6676 & 9.1652 \\
\bottomrule
\end{tabular}
\caption{Raw density and compression-radius values at the endpoints and the
connecting state of the extracted $N=32$ BFACF merge certificate for the supplied
figure-eight seed and its reflected mirror seed.  These are raw lattice values,
not smoothed thickness-normalized values.}
\label{tab:fig8-merge-key-values}
\end{table}

The connecting state has the same diameter spread as the two seeds, but a
slightly larger $\sD_2$ and two additional lattice edges.  Consequently, along
the passage through $\eta$ one has
\[
 \rho_2: 12.8545\longrightarrow 13.5982,
 \qquad
 \rho_\infty: 6.5465\longrightarrow 6.9830,
\]
and
\[
 \CRad_2^{\SC}: 4.6676\longrightarrow 4.7065,
 \qquad
 \CRad_\infty^{\SC}: 9.1652\longrightarrow 9.1652.
\]
Thus this particular merge certificate temporarily worsens the displayed raw
density values, while leaving the diameter-based compression radius unchanged at
$\eta$ and slightly increasing the $p=2$ compression radius.

The full path is more informative than the three distinguished states alone.  Its
edge-number sequence is
\[
\begin{gathered}
30,32,32,32,32,32,32,32,32,32,30,\\
32,32,32,32,32,32,30,30,30,30.
\end{gathered}
\]
At the intermediate state $s_{10}$, for example, the compression values improve
to
\[
 \CRad_2^{\SC}=4.4969,
 \qquad
 \CRad_\infty^{\SC}=8.2462,
\]
while the density values worsen to
\[
 \rho_2=13.3425,
 \qquad
 \rho_\infty=7.2761.
\]
This is a concrete path-level instance of the same phenomenon seen in the
trefoil pilot search: minimizing density and minimizing compression radius are
competing objectives on a filtered move graph.

\begin{figure}[ht]
\centering
\includegraphics[width=.78\textwidth]{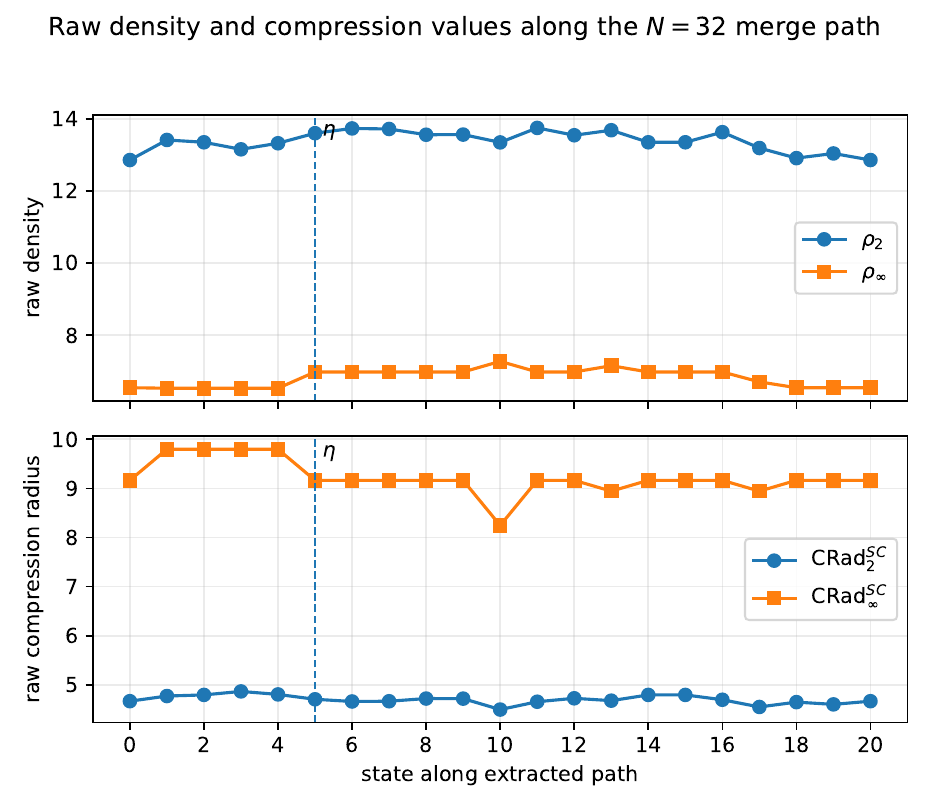}
\caption{Raw density and compression values along the extracted $21$-state
figure-eight BFACF merge path at length bound $N=32$.  The dashed vertical line
marks the connecting state $\eta=s_5$.  The plotted data are raw lattice values
under the labelled convention $\Thi_{\rm poly}=1/2$.}
\label{fig:fig8-merge-density-compression}
\end{figure}

In the raw simple-cubic convention used here, the formal product satisfies
\[
 \rho_D(P)\CRad_D^{\SC}(P)=2\ell(P)
\]
for both $D=\sD_2$ and $D=\diam$.  Hence the merge path raises the raw ropelength
proxy from $60$ at the $30$-edge seeds to $64$ at the $32$-edge states and then
returns to $60$.  This is the density--compression interpretation of one extremal path in the
complete $4_1$ hierarchy: its endpoints lie in distinct birth-level components,
and every distinct pair of birth-level components first becomes connected under
the cap $N=32$.  The full numerical table for the $21$ states is
included in the supplementary data as
\path{figure8_merge_path_32_density_compression.csv}.

\subsection{Raw profiles along the $6_3$ merge certificate}
\label{subsec:six3-merge-profiles}

We now apply the same raw functionals to a verified BFACF certificate realizing
the maximal barrier in the complete $6_3$ hierarchy from~\cite{OzawaDiscrete}.
Let $\sigma$ be the supplied $40$-edge minimal simple-cubic seed and let
$\sigma!$ be its reflected mirror.  With the labelling above, $\sigma\in A_7$
and $\sigma!\in B_7$.  By the global computation, at $N=42$ these components
lie in the opposite branches $A$ and $B=\mu(A)$, each containing $74$ minimal
classes and $12337$ bounded states; the two branches are disjoint.  At $N=44$,
a verified explicit path connects them.  The path has $153$ states and
$152$ BFACF moves, reaches length $44$ but never exceeds it, and meets at a
$42$-edge state $\eta=s_{116}$.  The seed branch has $116$ moves and the mirror
branch has $36$ moves.  Hence this path realizes the global maximal excess
barrier $4$ between the two level-$42$ branches.

\begin{table}[ht]
\centering
\small
\begin{tabular}{llrrrrrrr}
\toprule
state & role & $\ell$ & $\sD_2$ & $\sD_\infty$ & $\rho_2$ & $\rho_\infty$ & $\CRad_2^{\SC}$ & $\CRad_\infty^{\SC}$ \\
\midrule
$s_0$ & seed $\sigma$ & 40 & 2.6171 & 5.0990 & 15.2841 & 7.8446 & 5.2342 & 10.1980 \\
$s_{116}$ & meeting state $\eta$ & 42 & 2.5384 & 5.0990 & 16.5461 & 8.2369 & 5.0767 & 10.1980 \\
$s_{152}$ & mirror seed $\sigma!$ & 40 & 2.6171 & 5.0990 & 15.2841 & 7.8446 & 5.2342 & 10.1980 \\
\bottomrule
\end{tabular}
\caption{Raw density and compression-radius values at the endpoints and meeting
state of the extracted $N=44$ BFACF merge certificate for the supplied $6_3$
seed and its reflected mirror.  The values use the labelled convention
$\Thi_{\rm poly}=1/2$ and are not smoothed thickness-normalized values.}
\label{tab:six3-merge-key-values}
\end{table}

The $6_3$ meeting state behaves differently from the figure-eight meeting state.
Its diameter is unchanged, but its mean chord spread decreases.  Relative to the
seed, one therefore has
\[
 \rho_2:15.2841\longrightarrow16.5461,
 \qquad
 \rho_\infty:7.8446\longrightarrow8.2369,
\]
while
\[
 \CRad_2^{\SC}:5.2342\longrightarrow5.0767,
 \qquad
 \CRad_\infty^{\SC}:10.1980\longrightarrow10.1980.
\]
Thus the meeting state improves the raw $p=2$ compression radius even though it
worsens both density values.  This gives a particularly direct path-level
example of the competition between spreading and compactness.

The full certificate contains further trade-offs.  The smallest value of
$\rho_2$ along the path is $15.0260$ at $s_3$, whereas the smallest value of
$\rho_\infty$ is $6.8716$ at $s_{52}$.  The latter state has
$\CRad_\infty^{\SC}=12.8062$, the largest diameter-based compression value on
the path.  Conversely, the minimum of $\CRad_2^{\SC}$ is $5.0767$ at the
meeting state $s_{116}$.  No single state simultaneously minimizes all four
quantities.

\begin{figure}[ht]
\centering
\includegraphics[width=.88\textwidth]{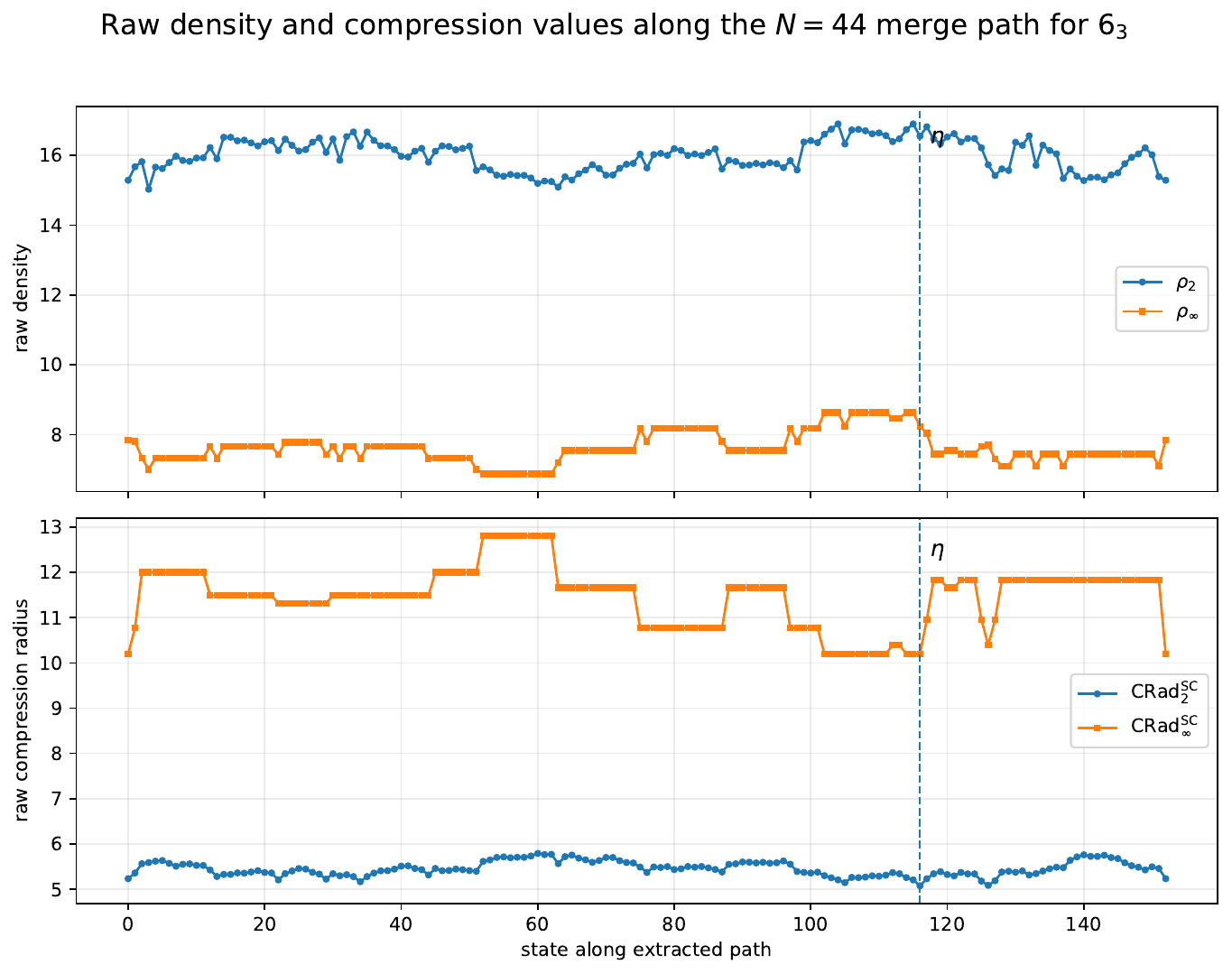}
\caption{Raw density and compression values along the extracted $153$-state
$6_3$ BFACF merge path at length bound $N=44$.  The dashed vertical line marks
the meeting state $\eta=s_{116}$.  The plotted data are raw lattice values under
the labelled convention $\Thi_{\rm poly}=1/2$.}
\label{fig:six3-merge-density-compression}
\end{figure}

For the raw cubic-lattice convention, the product identity again gives
\[
 \rho_D(P)\CRad_D^{\SC}(P)=2\ell(P).
\]
The raw ropelength proxy therefore takes the values $80$, $84$, and $88$ on
states of lengths $40$, $42$, and $44$, respectively.  The full derived table is
included in the revision source package as
\path{six3_merge_path_44_density_compression.csv}.

\begin{table}[ht]
\centering
\begin{tabular}{lrrrrr}
\toprule
knot & seed length & merge cap & mirror barrier & certificate & meeting length \\
\midrule
$4_1$ & 30 & 32 & 2 & 21 states, 20 moves & 32 \\
$6_3$ & 40 & 44 & 4 & 153 states, 152 moves & 42 \\
\bottomrule
\end{tabular}
\caption{Comparison of the two verified BFACF mirror certificates used for the
raw density and compression analysis.  In each knot these paths realize the
maximal excess barrier in the complete minimal-layer BFACF hierarchy; the
geometric values themselves remain path-specific.}
\label{tab:amphichiral-merge-comparison}
\end{table}

\subsection{Expected large-cap behavior and degeneracy}
\label{subsec:large-N-behavior}

The filtered columns in Table~\ref{tab:trefoil-filtered-N} are monotone by
construction: enlarging the length cap only enlarges the feasible set.  The
large-$N$ interpretation, however, is different for density and compression.
The observed behavior should not be read as convergence to a nondegenerate
continuous invariant.  Rather, it is evidence for why a ropelength window is
needed.

This is the raw-lattice counterpart of the continuous distinction between
unconstrained density degeneration~\cite{OzawaPDensity} and compactness-type
compression optimization~\cite{OzawaCompression}.

For the diameter density,
\[
 \rho_\infty(P)=\frac{\ell(P)}{\diam(P)},
\]
there is a universal lower bound
\[
 \rho_\infty(P)\ge 2,
\]
because any closed curve of length $L$ has diameter at most $L/2$.  Conversely,
one may localize the knotted part in a bounded region and put most of the length
into a long thin unknotted loop or stadium.  In such a family,
\[\diam(P)\sim \ell(P)/2,\]
and hence the unrestricted raw density is expected to approach the degenerate
limit
\[
 \rho_{\infty,N}(K)\longrightarrow 2
\]
as the cap tends to infinity, at least along sufficiently flexible families of
lattice representatives.  The trefoil values
\[
 5.8209,\quad 5.3072,\quad 4.8742,\quad 4.5227,\quad 4.2385
\]
are consistent with the initial stage of this degeneration.

For $p=2$ the expected limiting value depends on how much geometric freedom is
allowed.  In a fixed cubic lattice, a long thin out-and-back rectangular or
stadium-like degeneration behaves like a doubled interval.  If the doubled
interval has total length $L$, then
\[
 \sD_2^2\sim \frac{L^2}{24},
 \qquad
 \rho_2\sim \sqrt{24}=2\sqrt6\approx 4.8990.
\]
Thus the raw cubic-lattice profile may be expected to drift toward a value near
$\sqrt{24}$ in such restricted degenerating families.  By contrast, if smoothing
and mesh refinement allow the long unknotted part to approximate a round circle,
then a circle of length $L$ has
\[
 \sD_2=\frac{L}{\sqrt2\pi},
 \qquad
 \rho_2=\sqrt2\pi\approx 4.4429.
\]
Accordingly, the smoothed or mesh-refined unrestricted profile may have a lower
degenerate target than the fixed cubic-lattice raw profile.  These values are
not proposed as new knot invariants; they are benchmark degeneracies for
unwindowed density.

Compression profiles have the opposite character.  In the raw simple-cubic
normalization used in the tables,
\[
 \CRad_\infty(P)=2\diam(P),
 \qquad
 \CRad_2(P)=2\sD_2(P).
\]
Lowering density is usually achieved by making the representative spatially
larger, whereas lowering compression radius favors compact representatives.  It
is therefore expected that the filtered compression profiles stabilize at finite
positive values associated with compact lattice representatives.  In the present
restricted trefoil search, the best observed value of $\CRad_2$ is already
attained at length $26$, while the best observed $\CRad_\infty$ remains the
24-step seed value throughout $N\le32$.  A full or sampled BFACF search may lower
these values, but it should not exhibit the same degeneration to the universal
lower bounds seen by the density profiles.

In summary, the pilot data suggest the following qualitative picture:
\[
\begin{array}{c|c|c}
\text{quantity} & \text{observed cap behavior} & \text{expected unrestricted behavior}\\
\hline
\rho_\infty & \text{decreases} & \text{degenerates toward }2\\
\rho_2 & \text{decreases} & \text{toward long-loop benchmarks such as }
\sqrt{24}\text{ or }\sqrt2\pi\\
\CRad_\infty & \text{currently stable} & \text{finite compactness optimum}\\
\CRad_2 & \text{improves early, then stable} & \text{finite compactness optimum}
\end{array}
\]
This distinction is one of the main reasons for retaining the ropelength-windowed
formulation in the definitions.  Without such a window, the density profiles
mainly detect how efficiently a knot can be hidden inside a long nearly
unknotted loop; the compression profiles instead measure compactness under the
chosen thickness convention or certificate.

These tables should be read as a pilot computation.  In the terminology of the
experimental protocol, the labels are ``seed-generated'', ``positive-detour
restricted'', and ``raw lattice''.  The same table format will be used for the
smoothed-thickness-normalized computations once a certified smoothing and
thickness routine is incorporated.  Because the number of positive-detour
candidates grows rapidly, from $27018$ at exact length $30$ to $485184$ at exact
length $32$, computations beyond $N=32$ should either be explicitly labelled as
larger cap searches, or else use a pruned, beam-search, or BFACF-sampling label.

%==================================================
\section{Outlook: planned experimental protocol for small knots}
%==================================================

We now specify the planned computational protocol for the next experimental stage.  The protocol is designed to
be compatible with lattice-filtered BFACF computations and with later replacement
by exhaustive minimal-layer data.

\subsection{Target knot types}
The raw merge-certificate stage has now been carried out for supplied mirror
seed pairs of $4_1$ and $6_3$.  Both knots remain in the following target set
because the certified smoothed-thickness profiles have not yet been completed.
The first experimental set is
\[
\begin{gathered}
0_1,\quad 3_1,\quad 4_1,\quad 5_1,\quad 5_2,\\
6_1,\quad 6_2,\quad 6_3,\quad 7_1,\quad 7_2,\\
3_1\#3_1,\quad 3_1\#\overline{3_1}.
\end{gathered}
\]
Here $3_1\#3_1$ is the granny knot, while $3_1\#\overline{3_1}$ denotes the
square knot.  We do not include a separate reflected-seed diagnostic in the
present protocol, since the distance-based functionals studied here are not
designed to detect chirality.

\subsection{Length levels}
For each knot type $K$, begin from one or more supplied seeds at the known or
chosen birth level $N_0$.  Then compute at levels
\[
 N=N_0,
 N_0+2,
 N_0+4,
 \ldots,
 N_{\max}.
\]
For a window parameter $\lambda$, take
\[
 N_{\max}=\lfloor \lambda N_0\rfloor
\]
with parity adjusted to the move system.

\begin{table}[ht]
\centering
\begin{tabular}{lccc}
\toprule
$K$ & initial length $N_0$ & first levels & role in experiment \\
\midrule
$0_1$ & $4$ & $4,6,8,\ldots$ & calibration \\
$3_1$ & $24$ & $24,26,28,\ldots$ & first nontrivial prime \\
$4_1$ & $30$ & $30,32,34,\ldots$ & four-crossing prime \\
$5_1$ & $34$ & $34,36,38,\ldots$ & torus knot test \\
$5_2$ & $36$ & $36,38,40,\ldots$ & twist knot test \\
$6_1$ & $40$ & $40,42,44,\ldots$ & six-crossing prime \\
$6_2$ & $40$ & $40,42,44,\ldots$ & six-crossing prime \\
$6_3$ & $40$ & $40,42,44,\ldots$ & six-crossing prime \\
$7_1$ & $44$ & $44,46,48,\ldots$ & torus knot test \\
$7_2$ & $46$ & $46,48,50,\ldots$ & seven-crossing prime \\
$3_1\#3_1$ & $40$ & $40,42,44,\ldots$ & granny knot \\
$3_1\#\overline{3_1}$ & $40$ & $40,42,44,\ldots$ & square knot \\
\bottomrule
\end{tabular}
\caption{Initial target list for length-filtered smoothed density and compression
computations.  The values for $0_1$ and $3_1$ are classical simple-cubic minimal
lengths, with the trefoil minimum due to Diao~\cite{DiaoMinimal,DiaoSmallest};
for the remaining small prime knots and the listed connected sums we use the
standard simple-cubic minimum-length table of Janse van Rensburg and
Rechnitzer~\cite{MinimalKnots} as the intended birth levels.  In an actual data
release, each seed file should still be independently checked for length,
embeddedness, and knot type.  If a supplied seed is not certified minimal, the
corresponding $N_0$ should be interpreted as a supplied-seed birth level rather
than a proven lattice minimum.}
\label{tab:target-list}
\end{table}

\subsection{Per-level computation}
At each level $N$:
\begin{enumerate}[label=(\arabic*)]
\item Generate the reachable BFACF graph from the supplied seed set, with length
cap $N$.
\item Canonicalize vertices modulo orientation-preserving lattice isometries.
\item For each vertex $P$, generate a finite family $\Smooth(P)$ of smoothed
perturbations.
\item Compute $\Thi(\gamma)$, normalize to $\widehat\gamma$, and evaluate
$\rho_2$, $\rho_\infty$, $\CRad_2$, and $\CRad_\infty$.
\item Record the best value and the vertex/perturbation realizing it.
\end{enumerate}

\subsection{Output tables}
The main experimental output should have one table per functional, following the
format of Tables~\ref{tab:trefoil-exact-N} and~\ref{tab:trefoil-filtered-N}.
For example, for $\rho_2$:
\[
\begin{array}{c|cccc|c}
K & N_0 & N_0+2 & N_0+4 & N_0+6 & \text{best}\\\hline
3_1 & * & * & * & * & *\\
4_1 & * & * & * & * & *\\
\end{array}
\]
The entries should be labelled as one of the following:
\begin{itemize}
\item exhaustive at level $N$;
\item seed-generated at level $N$;
\item capped search at level $N$;
\item stochastic perturbation search at level $N$.
\end{itemize}
This labelling prevents experimental values from being mistaken for absolute
knot-type invariants.

%==================================================
\section{Outlook: connected-sum tests}
%==================================================

Connected sums provide a useful check on whether the profiles see composite
structure.  The pair $3_1\#3_1$ and $3_1\#\overline{3_1}$ will be used as a
standard composite test pair.  Since the functionals in this paper are based on
Euclidean distances, they should not be expected to detect chirality by
themselves.  The point of the comparison is instead to ask how density and
compression profiles behave under connected sum and whether low-density
representatives emerge only after the length cap is increased.

For each composite seed, the output should report the same labels as in the prime
case: whether the computation is exhaustive, seed-generated, capped, or
stochastic.  Any comparison between the granny and square knots must be read with
this search label in mind.

%==================================================
\section{Component profiles and merge scales}
%==================================================

The lattice-filtered move graph carries more information than a single global
minimum.  At level $N$, let $\Comp_N$ be the set of connected components of
$G_N^{\Lat,\Move}(K;S)$.  For $C\in\Comp_N$, define the componentwise smoothed
profile
\[
 \rho_p(C)=\min\{\rho_p(\widehat\gamma):P\in C,
       \gamma\in\Smooth(P)\},
\]
and similarly
\[
 \CRad_D(C)=\min\{\CRad_D(\widehat\gamma):P\in C,
       \gamma\in\Smooth(P)\}.
\]
The global seed-generated value is the minimum over components:
\[
 \rho_{p,N}^{\Lat,\Move,\Smooth,\seed}(K;S)
 =\min_{C\in\Comp_N}\rho_p(C).
\]

\begin{theorem}[Component monotonicity under merge]
Assume the seed-generated graphs are nested as $N$ increases.  Let
$C_1,\ldots,C_m$ be components of $G_N^{\Lat,\Move}(K;S)$ which lie in a single
component $C'$ of $G_{N'}^{\Lat,\Move}(K;S)$ for some $N'\ge N$.  Then
\[
 \rho_p(C')\le \min_{1\le j\le m}\rho_p(C_j),
 \qquad
 \CRad_D(C')\le \min_{1\le j\le m}\CRad_D(C_j).
\]
\end{theorem}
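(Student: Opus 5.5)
The plan is to reduce the statement to a containment of feasible sets, exactly as in the proof of Proposition~\ref{prop:monotonicity}, but carried out componentwise. The componentwise profiles $\rho_p(C)$ and $\CRad_D(C)$ are minima of a fixed vertex-and-smoothing functional over the finite set
\[
 F(C)=\{\widehat\gamma : P\in C,\ \gamma\in\Smooth(P)\}.
\]
So the whole task is to show that $F(C_j)\subseteq F(C')$ for every $j$. Once that holds, minimum-over-a-superset gives $\rho_p(C')\le\rho_p(C_j)$ and $\CRad_D(C')\le \CRad_D(C_j)$ for each $j$, and taking the minimum over $j$ gives the displayed inequalities.

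First I check that the vertex sets are nested: each $C_j\subseteq C'$ as sets of canonical lattice polygons. By the nesting hypothesis, every vertex $P$ of $G_N^{\Lat,\Move}(K;S)$ is a vertex of $G_{N'}^{\Lat,\Move}(K;S)$. The hypothesis that $C_j$ lies in $C'$ is meant exactly this way: the vertices of $C_j$ belong to the component $C'$. I will note why this reading is automatic. Any edge of $C_j$ is a single $\Move$-move between polygons of length at most $N\le N'$, so it is also an edge of the level-$N'$ graph. Hence the connected set $C_j$ maps into a single component of the level-$N'$ graph, and by assumption that component is $C'$.

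Second, the smoothing family $\Smooth(P)$ depends only on the polygon $P$ and the fixed finite algorithmic data $\mathcal A$, not on the level $N$ or on the component containing $P$. Likewise, the normalization $\gamma\mapsto\widehat\gamma$ depends only on $\gamma$. So the same finite set of normalized curves is attached to $P$ whether $P$ is viewed in $C_j$ or in $C'$. This gives $F(C_j)\subseteq F(C')$. The minima are attained and finite because each $F(C)$ is a finite nonempty set, by Theorem~\ref{thm:smoothed-finite-computability}; $\Smooth(P)$ is nonempty by definition.

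The only point needing care is this independence of the smoothing data from the ambient level. If an implementation discarded smoothed curves differently at different levels (for instance, embeddedness certificates that depend on search parameters), containment could fail. I will state explicitly that $\Smooth$ is fixed across levels, as the definitions presuppose. The same argument applies verbatim to the raw profiles, where the functional depends on $P$ alone.
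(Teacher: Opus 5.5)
Your proposal is correct and follows essentially the paper's argument. Because each $C_j\subseteq C'$ and the smoothing family $\Smooth(P)$ depends only on $P$, the finite feasible set defining $\rho_p(C_j)$ or $\CRad_D(C_j)$ is contained in the one defining $\rho_p(C')$ or $\CRad_D(C')$, and a minimum over a larger set cannot increase. Your extra checks, that edges of $C_j$ survive in the level-$N'$ graph and that the smoothing data do not depend on the level, spell out what the paper's one-line proof takes for granted.
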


\begin{proof}
Each component $C_j$ is contained in the larger feasible component $C'$ at level
$N'$.  The finite set over which the minimum defining $\rho_p(C')$ is taken
therefore contains the feasible sets defining the $\rho_p(C_j)$.  The same
argument applies to $\CRad_D$.
\end{proof}

\begin{remark}
This theorem gives a simple but useful way to combine merge-scale data with
geometric optimization.  A merge event may lower the best density or compression
value available to a component, but it cannot force the componentwise minimum to
increase.  Thus one can record, for each persistent component, both its merge
scale and the first level at which a prescribed density or compression threshold
is achieved.
\end{remark}

\subsection{Density- and compression-sublevel graphs}

Fix one of the vertex functionals used above.  In the raw model this may be
$F(P)=\rho_p^{\Lat}(P)$ or $F(P)=\CRad_D^{\Lat}(P)$.  For a finite smoothing
scheme, put instead
\[
 F_{\Smooth}(P)=\min_{\gamma\in\Smooth(P)}F(\widehat\gamma).
\]
In the notation below, $F(P)$ denotes whichever of these raw or smoothed vertex
costs has been fixed.  For a threshold $\Lambda$, define the induced sublevel graph
\[
 G_{N,\Lambda}^{F}(K;S)
 =G_N^{\Lat,\Move}(K;S)
 \bigl[\{P:F(P)\leq\Lambda\}\bigr].
\]
Its connected components will be called the $F$-admissible components at
parameters $(N,\Lambda)$.  Thus lattice length controls which representatives
are available, while $\Lambda$ controls which of those representatives satisfy
the chosen geometric bound.

\begin{proposition}[Finite two-parameter filtration]
Assume the seed-generated graphs are nested in $N$.  If $N\leq N'$ and
$\Lambda\leq\Lambda'$, then
\[
 G_{N,\Lambda}^{F}(K;S)\subseteq
 G_{N',\Lambda'}^{F}(K;S).
\]
For every finite set of length levels and thresholds, all component births,
component mergers, and first threshold-crossing levels are computable by finite
graph search.
\end{proposition}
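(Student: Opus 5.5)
The inclusion is verified separately on vertices and on edges, and the computability claim then follows by finiteness. Both $G_{N,\Lambda}^{F}(K;S)$ and $G_{N',\Lambda'}^{F}(K;S)$ are induced subgraphs of graphs of the form $G_N^{\Lat,\Move}(K;S)$. It therefore suffices to check two things: the vertex sets are nested, and every edge of the smaller graph is an edge of the larger.

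For vertices, let $P$ be a vertex of $G_{N,\Lambda}^{F}(K;S)$. By the nesting hypothesis, $P$ is a vertex of $G_{N'}^{\Lat,\Move}(K;S)$. The vertex cost $F(P)$ is intrinsic to $P$: it is either the raw value or $\min_{\gamma\in\Smooth(P)}F(\widehat\gamma)$, and neither depends on $N$. Hence $F(P)\le\Lambda\le\Lambda'$, and $P$ lies in the larger sublevel graph.

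For edges, an edge of the smaller graph joins two vertices differing by one move in $\Move$. Adjacency in the filtered move graph depends only on the two polygons. Both endpoints lie in the larger vertex set, and that graph is induced, so the edge is present there as well.

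The one point needing care is the precise meaning of ``nested''. I read it as saying that $G_N^{\Lat,\Move}(K;S)$ is a subgraph of $G_{N'}^{\Lat,\Move}(K;S)$. This holds automatically for seed-generated graphs, because a $\Move$-path staying below length $N$ also stays below $N'$.

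For the second claim, fix finite sets of levels $N_1<\dots<N_a$ and thresholds $\Lambda_1<\dots<\Lambda_b$. Each $G_{N_i}^{\Lat,\Move}(K;S)$ is finite, as in Proposition~\ref{prop:finite-state-computability}. Each vertex cost is a finite computable number: in the raw case this is direct, and in the smoothed case it follows from Theorem~\ref{thm:smoothed-finite-computability}. So each of the $ab$ sublevel graphs can be built explicitly.

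Components of each sublevel graph are found by breadth-first search or union--find. By the inclusion just proved, each component at $(N_i,\Lambda_j)$ lies in a unique component at any larger parameter pair. Computing these induced maps of component sets along the grid records all births, meaning components not in the image of any predecessor, and all mergers, meaning non-injective maps. The first threshold-crossing level of a component is obtained by scanning the finite list of levels.

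I expect no real obstacle. The only subtlety is the caveat ``up to numerical precision'': a vertex whose cost lies numerically at $\Lambda$ may be misclassified. I would address this by stating that the computability claim is relative to the fixed evaluated values $F(P)$, exactly as in Proposition~\ref{prop:finite-state-computability}.
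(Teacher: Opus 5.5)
Your proof is correct and follows the paper's argument: vertex inclusion from nesting together with $F(P)\le\Lambda\le\Lambda'$, edge inclusion because both graphs are induced from the ambient move graph, and computability because every graph in a finite parameter range is finite. Your additional remarks are accurate refinements of what the paper states more tersely: that nesting is automatic for cap-defined seed-generated graphs, and that births and mergers can be read off as non-surjectivity and non-injectivity of the induced maps on component sets.
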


\begin{proof}
The length inequality gives inclusion of the underlying seed-generated vertex
sets, and the threshold inequality preserves every vertex satisfying
$F\leq\Lambda$.  The induced edges are inherited from the larger move graph.
All graphs in a fixed finite parameter range are finite, so their connected
components and inclusion-induced mergers can be computed directly.
\end{proof}

\begin{remark}
This two-parameter construction is the discrete counterpart of the density- and
compression-sublevel filtrations proposed in~\cite{OzawaCompression}.  The
length-only merge scales of~\cite{OzawaDiscrete} and the global minimum profiles
of the present paper are one-parameter shadows of this finer object.  For
example, the complete $4_1$ and $6_3$ minimal-layer merge trees determine when
all birth-level components can merge as the length cap grows, while the verified
mirror certificates select extremal paths on which the sublevel
graphs additionally record the density or compression cost required along a
connecting route.
\end{remark}

%==================================================
\section{Summary of unconditional results}\label{sec:unconditional}
%==================================================

We collect the unconditional statements which do not depend on any unproved
continuous-limit assertion or on completion of the planned smoothed numerical
experiments.

\begin{theorem}[Computable smoothed lattice profiles]
Fix a knot type $K$, a periodic lattice $\Lat$, a local move system $\Move$, a
finite seed set $S$, a length cap $N$, and a finite smoothing-perturbation scheme
$\Smooth$.  Then the smoothed seed-generated profiles
\[
 \rho_{p,N}^{\Lat,\Move,\Smooth,\seed}(K;S),
 \qquad
 \CRad_{D,N}^{\Lat,\Move,\Smooth,\seed}(K;S)
\]
are well-defined finite computable quantities.  As $N$ increases, these profiles
are nonincreasing along nested seed-generated searches.
\end{theorem}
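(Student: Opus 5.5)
The statement has two parts: well-definedness with finite computability, and monotonicity in $N$. The plan is to reduce each part to a result already established and to make the hidden hypotheses explicit.

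For the first part, I would invoke Theorem~\ref{thm:smoothed-finite-computability} directly. The one point to check is that $G_N^{\Lat,\Move}(K;S)$ is finite for a periodic lattice and a finite cap $N$. Every vertex is a lattice polygon with $\ell(P)\le N$ and is reachable from the finite seed set $S$. Modulo lattice translations, such a polygon can be based at one of finitely many representative lattice points of a fundamental domain. From there it is determined by a sequence of at most $N$ lattice edges, and only finitely many edge directions occur in a periodic lattice. Hence $\Pol_N^{\Lat}(K)$, and with it the seed-generated subgraph, is finite.

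Each vertex $P$ carries a nonempty finite set $\Smooth(P)$. Every curve in it is an embedded $C^{1,1}$ curve, so $\Thi(\gamma)>0$ and the normalization $\widehat\gamma$ is defined. The functionals $\rho_p(\widehat\gamma)$ and $\CRad_D(\widehat\gamma)$ are then finite positive reals, by the integrability remark for $p>-1$ and because $\sD_p>0$ for a nondegenerate curve. The feasible set is a finite nonempty set of reals, since $S\neq\varnothing$ is implicit, so its minimum exists and is computable up to the precision of the quadrature and thickness evaluation. For line-and-arc schemes, the certified computability proposition for line-and-arc smoothing supplies the thickness step.

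For monotonicity, I would argue exactly as in Proposition~\ref{prop:monotonicity}. Suppose $N\le N'$ and the searches are nested, that is, the vertex set of $G_N^{\Lat,\Move}(K;S)$ is contained in that of $G_{N'}^{\Lat,\Move}(K;S)$. Nesting in fact holds automatically for seed-generated graphs, because a move path staying at length $\le N$ also stays at length $\le N'$. The smoothing family $\Smooth(P)$ depends only on $P$, not on $N$. Therefore the set $\{\widehat\gamma: P\in G_N,\ \gamma\in\Smooth(P)\}$ is contained in the corresponding set at level $N'$, and a minimum over a larger finite set cannot increase.

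The only genuine subtlety, and the step I expect to be the main obstacle, is the positivity of thickness, which is needed so that $\CRad_D$ and the normalization are finite. I would take this from the defining requirement that each $\gamma\in\Smooth(P)$ is embedded and $C^{1,1}$: for such curves the minimum radius of curvature and the doubly critical self-distance are both positive by compactness. In the certified version one may use $\tau_{\rm cert}>0$ instead.
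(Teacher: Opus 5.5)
Your proposal is correct and matches the paper's proof, which consists of citing Theorem~\ref{thm:smoothed-finite-computability} for well-definedness and finite computability, and then repeating the feasible-set inclusion argument of Proposition~\ref{prop:monotonicity} for monotonicity. Your additional checks are sound and make explicit hypotheses the paper leaves implicit: finiteness of the capped graph via periodicity, positivity of thickness for embedded $C^{1,1}$ curves, and automatic nesting under the definition of $G_N^{\Lat,\Move}(K;S)$, since a move path with all lengths at most $N$ also has all lengths at most $N'$.
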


\begin{proof}
Well-definedness and finite computability follow from
Theorem~\ref{thm:smoothed-finite-computability}.  Monotonicity is the same
feasible-set inclusion argument as Proposition~\ref{prop:monotonicity}.
\end{proof}

\begin{question}[Stabilized lattice approximation with scaled windows]
Let $\Lat_h=h\ZZ^3$ and let $\Smooth_h$ be a family of smoothing schemes whose
rounding radii, perturbation sizes, and certification tolerances tend to zero with
$h$ while preserving embeddedness and isotopy type.  For a fixed knot type $K$ and
window parameter $\lambda\ge 1$, take the lattice length cap
\[
 N_h(\lambda)=\left\lfloor \frac{\lambda\Rop(K)}{h}\right\rfloor
\]
with the parity adjustment required by the move system.  Under what additional
thickness, approximation, and optimization hypotheses do the smoothed lattice
profiles with cap $N_h(\lambda)$ converge to the corresponding ropelength-windowed
continuous profiles?
\end{question}

%==================================================
\section{Outlook: implementation notes and reproducibility}
%==================================================

The material in this section is an implementation plan and reproducibility
specification, not an additional numerical result of the present paper.  The
purpose is to make clear how the raw pilot computations and the planned smoothed
computations should be archived and labelled.

The following pseudocode describes the intended computation.

\begin{verbatim}
for K in target_knots:
    seeds = load_seed_set(K)
    N0 = initial_length(K)
    for N in admissible_levels(N0, Nmax):
        G = explore_BFACF(seeds, max_length=N)
        best = initialize_records()
        for P in G.vertices:
            for gamma in smooth_perturbations(P, eps, radius, samples):
                thi = thickness(gamma)
                gamma_hat = scale(gamma, 1/thi)
                values = evaluate_density_and_compression(gamma_hat)
                update(best, values, P, gamma)
        write_summary(K, N, best)
\end{verbatim}

For reproducibility, each output record should include:
\begin{enumerate}[label=(\roman*)]
\item knot type and seed file;
\item length cap $N$;
\item number of vertices explored;
\item number of smoothing perturbations per vertex;
\item random seed, if stochastic perturbations are used;
\item best values and the realizing vertex identifiers;
\item whether the search is exhaustive, seed-generated, or capped.
\end{enumerate}

\subsection*{Data and code availability}
The supplementary code and data for the raw-lattice trefoil pilot computations
in Section~\ref{sec:raw-pilots} are archived on Zenodo~\cite{OzawaCompression}:
\begin{center}
\url{https://doi.org/10.5281/zenodo.21319883}.
\end{center}
That archive contains the $24$-edge trefoil seed coordinates, the positive $+2$
plaquette-detour generator, scripts for evaluating $\rho_2$, $\rho_\infty$,
$\CRad_2$, and $\CRad_\infty$, and the summary tables for
$N=24,26,28,30,32$.

The verified BFACF mirror-merge certificates, seed provenance, search summaries,
and independent verification files for the supplied $4_1$ and $6_3$ seed pairs
are archived with the companion move-graph data on
Zenodo~\cite{OzawaDiscreteData}:
\begin{center}
\url{https://doi.org/10.5281/zenodo.21304711}.
\end{center}
The source package accompanying this revision includes the two derived raw
path-profile figures, the complete component summary
\path{tree_geometric_profile_summary.csv}, the $19618$-state table
\path{figure8_tree_state_metrics.csv}, the two-branch $6_3$ table
\path{six3_tree_state_metrics_up_to_N42.csv}, and the C++17 enumerator/evaluator
\path{tree_profiles.cpp}.  The $4_1$ level-$32$ component and both $6_3$
level-$42$ components are exhaustive under the stated quotient.  The final
$6_3$ level-$44$ bounded component is not exhaustively tabulated here; only its
minimal support and selected verified merge certificates are used.  All
computations remain explicitly labelled as raw-lattice, exhaustive bounded,
seed-generated, restricted, or non-exhaustive as appropriate.

%==================================================
\section{Further directions}
%==================================================

Several refinements are natural.

\begin{problem}[Certified thickness for smoothed lattice curves]
Develop a certified algorithm for lower-bounding $\Thi(\gamma)$ for the
specific smoothing schemes used in the experiments.
\end{problem}

\begin{problem}[Further exhaustive minimal-layer profiles]
The raw minimal-layer distributions are now complete for $4_1$ and $6_3$ under
the present quotient.  Extend the computation to further small knot types and
to certified smoothed representatives, and compare the resulting componentwise
profiles across knot types and lattices.
\end{problem}

\begin{problem}[Bounded profile at the top $6_3$ merge]
Exhaustively enumerate the full $N=44$ bounded component of $6_3$ and compute
the minimum, maximum, mean, median, and distribution of the raw and smoothed
density/compression functionals on that root node.  Determine which, if any,
new geometric optima first appear at the top merge rather than already inside
the two level-$42$ branches.
\end{problem}

\begin{problem}[Geometric cost of a merge path]
For a vertex functional $F$, define the minimax cost of connecting two seed
components under a length cap $N$ by minimizing the largest $F$-value attained
along a connecting move path.  Compute this cost for the supplied $4_1$ and
$6_3$ mirror seed pairs and compare it with the ordinary length-only merge
barrier.  Determine whether the minimizing paths for density and compression
can be chosen to coincide.
\end{problem}

%==================================================
\appendix
\section{Coordinate seed used for the trefoil pilot computation}
%==================================================

For reproducibility, we record the 24-step simple-cubic trefoil seed used in
Tables~\ref{tab:pilot-density}--\ref{tab:trefoil-filtered-N}.  The vertices are
listed cyclically, with the final vertex joined to the first by a unit edge.  The
coordinate list is taken from the KnotPlot coordinate page~\cite{KnotPlotCoords};
the use of a 24-step trefoil seed is consistent with Diao's proof that the
smallest nontrivial simple-cubic lattice knots have length 24 and are
trefoils~\cite{DiaoMinimal,DiaoSmallest}.
\begin{verbatim}
0:  ( 1,-1, 1)   1:  ( 1, 0, 1)   2:  ( 1, 1, 1)   3:  ( 1, 2, 1)
4:  ( 0, 2, 1)   5:  (-1, 2, 1)   6:  (-1, 1, 1)   7:  (-1, 0, 1)
8:  (-1, 0, 0)   9:  ( 0, 0, 0)  10:  ( 1, 0, 0)  11:  ( 2, 0, 0)
12: ( 2, 0, 1)  13:  ( 2, 0, 2)  14:  ( 1, 0, 2)  15:  ( 0, 0, 2)
16: ( 0, 0, 1)  17:  ( 0, 1, 1)  18:  ( 0, 1, 0)  19:  ( 0, 1,-1)
20: ( 1, 1,-1)  21:  ( 1, 0,-1)  22:  ( 1,-1,-1)  23:  ( 1,-1, 0)
\end{verbatim}
Using the segment formulas in Section~\ref{sec:raw-pilots}, this coordinate list
gives
\[
 \sD_2^2=\frac{643}{144},\qquad
 \sD_2\approx 2.1131,
\]
which verifies the first trefoil row of Table~\ref{tab:pilot-density}.  The
length-26 candidate in that table is obtained by replacing the edge from vertex
4 to vertex 5 by the detour
\[
(0,2,1)\to (0,3,1)\to(-1,3,1)\to(-1,2,1).
\]

\section*{Acknowledgments}

\paragraph{Use of generative AI.}
The author used ChatGPT (OpenAI) as an interactive aid during the preparation of
this manuscript, including for mathematical discussion, the exploration of
possible formulations and proof strategies, and the improvement of exposition.
All definitions, statements, proofs, computations, code, and references were
independently checked and verified by the author, who takes full responsibility
for the accuracy, originality, and content of the manuscript.

%==================================================
%\section*{Acknowledgements}
%==================================================
%The author thanks the developers of lattice-knot enumeration and BFACF sampling methods, whose work provides the computational background for the present framework.  This manuscript is intended as a self-contained framework paper; larger BFACF experiments and smoothed certified computations will be reported separately.

%==================================================


\begin{thebibliography}{99}

\bibitem{ACPR}
T. Ashton, J. Cantarella, M. Piatek, and E. Rawdon,
\emph{Knot tightening by constrained gradient descent},
Experiment. Math. \textbf{20} (2011), no. 1, 57--90.

\bibitem{BergForster}
B. Berg and D. F{\"o}rster,
\emph{Random paths and random surfaces on a digital computer},
Phys. Lett. B \textbf{106} (1981), 323--326.

\bibitem{ACCF}
C. Arag{\~a}o de Carvalho, S. Caracciolo, and J. Fr{\"o}hlich,
\emph{Polymers and $g|\varphi|^4$ theory in four dimensions},
Nucl. Phys. B \textbf{215} (1983), 209--248.

\bibitem{JvRW}
E. J. Janse van Rensburg and S. G. Whittington,
\emph{The BFACF algorithm and knotted polygons},
J. Phys. A \textbf{24} (1991), 5553--5567.

\bibitem{CKS}
J. Cantarella, R. B. Kusner, and J. M. Sullivan,
\emph{On the minimum ropelength of knots and links},
Invent. Math. \textbf{150} (2002), 257--286.

\bibitem{DiaoMinimal}
Y. Diao,
\emph{Minimal knotted polygons on the cubic lattice},
J. Knot Theory Ramifications \textbf{2} (1993), no. 4, 413--425.

\bibitem{DiaoSmallest}
Y. Diao,
\emph{The number of smallest knots on the cubic lattice},
J. Statist. Phys. \textbf{74} (1994), 1247--1254.

\bibitem{EHL}
P. Exner, E. M. Harrell, and M. Loss,
\emph{Inequalities for means of chords, with application to isoperimetric problems},
Lett. Math. Phys. \textbf{75} (2006), 225--233.

\bibitem{GM}
O. Gonzalez and J. H. Maddocks,
\emph{Global curvature, thickness, and the ideal shapes of knots},
Proc. Natl. Acad. Sci. USA \textbf{96} (1999), 4769--4773.

\bibitem{JansevanRensburg}
E. J. Janse van Rensburg,
\emph{The Statistical Mechanics of Interacting Walks, Polygons, Animals and Vesicles},
Oxford University Press, 2015.

\bibitem{LSDr}
R. A. Litherland, J. Simon, O. Durumeric, and E. Rawdon,
\emph{Thickness of knots},
Topology Appl. \textbf{91} (1999), 233--244.

\bibitem{MinimalKnots}
E. J. Janse van Rensburg and A. Rechnitzer,
\emph{Minimal knotted polygons in cubic lattices},
J. Stat. Mech. Theory Exp. (2011), P09008.

\bibitem{Scharein2009}
R. Scharein, K. Ishihara, J. Arsuaga, Y. Diao, K. Shimokawa, and M. Vazquez,
\emph{Bounds for the minimum step number of knots in the simple cubic lattice},
J. Phys. A: Math. Theor. \textbf{42} (2009), 475006, 24 pp.

\bibitem{Ohara}
J. O'Hara,
\emph{Energy of a knot},
Topology \textbf{30} (1991), 241--247.


\bibitem{KnotPlotCoords}
R. Scharein,
\emph{KnotPlot cubic-lattice knot coordinates},
KnotPlot data page, available at \url{https://knotplot.com/EquiLat/coords.html}
(accessed for the coordinate seed recorded in Appendix A).

\bibitem{OzawaDiscrete}
M.~Ozawa,
\emph{Merge trees of lattice knots},
preprint (2026), arXiv:2605.25322.

\bibitem{OzawaDiscreteData}
M.~Ozawa,
\emph{Supplementary code and data for arXiv:2605.25322},
Zenodo (2026), DOI:
\href{https://doi.org/10.5281/zenodo.21304711}{10.5281/zenodo.21304711}.

\bibitem{OzawaPDensity}
M.~Ozawa,
\emph{Unconstrained and ropelength-windowed $p$-densities of knot types},
preprint (2026), arXiv:2604.23621.

\bibitem{OzawaCompression}
M.~Ozawa,
\emph{Geometric densities and compression radii of knot types},
preprint (2026), arXiv:2604.27912.

\bibitem{OzawaZenodoPDensity}
M.~Ozawa,
\emph{Supplementary code and data for arXiv:2605.29160},
Zenodo, version 3 (2026),
\href{https://doi.org/10.5281/zenodo.21319883}
{doi:10.5281/zenodo.21319883}.

\bibitem{Rawdon}
E. J. Rawdon,
\emph{Approximating the thickness of a knot},
Ideal Knots, Ser. Knots Everything \textbf{19}, World Scientific, 1998, 143--150.

\bibitem{RawdonSimon}
E. J. Rawdon and J. K. Simon,
\emph{Polygonal approximation and energy of smooth knots},
J. Knot Theory Ramifications \textbf{15} (2006), 429--451.

\end{thebibliography}
\end{document}